\newcommand{\e }{\varepsilon }
\newcommand{\R }{\mathbb{R} }
\newcommand{\pnorm}[2][]{\if #1'' \|#2\|_p \else \|#2\|_{#1} \fi}
\newtheorem{lemma}{Lemma}[section]
\newtheorem{proposition}[lemma]{Proposition}
\newtheorem{theorem}[lemma]{Theorem}
\theoremstyle{definition}
\newtheorem{remark}[lemma]{Remark}
\newtheorem{definition}[lemma]{Definition}
\numberwithin{equation}{section}
\title[Nonlocal problems with singular nonlinearity]{Nonlocal problems with singular nonlinearity}
\author[A.\ Canino]{Annamaria Canino}
\author[L.\ Montoro]{Luigi Montoro}
\author[B.\ Sciunzi]{Berardino Sciunzi}
\author[M.\ Squassina]{Marco Squassina}
\address[A.\ Canino, L.\ Montoro, B. Sciunzi]{Dipartimento di Matematica e Informatica
\newline\indent
Universit\`a della Calabria
\newline\indent
Via Pietro Bucci, Cubo 30B
Rende (CS), Italy}
\email{canino@mat.unical.it,montoro@mat.unical.it,sciunzi@mat.unical.it}
\address[M.\ Squassina]{Dipartimento di Informatica
	\newline\indent
	Universit\`a degli Studi di Verona,
	Verona, Italy}
\email{marco.squassina@univr.it}
\subjclass[2010]{Primary 35R11, 35J62, Secondary 35A15}
\keywords{Fractional $p$-Laplacian, variational methods, singular nonlinearity}
\thanks{The authors are members of the Gruppo Nazionale per l'Analisi Matematica, la Probabilit\`a
	e le loro Applicazioni (GNAMPA) of the Istituto Nazionale di Alta Matematica (INdAM)}
\begin{document}

\begin{abstract}
We investigate existence and uniqueness of solutions for a class of nonlinear nonlocal problems involving
the fractional $p$-Laplacian operator and  singular nonlinearities.
\end{abstract}

\maketitle

\begin{center}
	\begin{minipage}{8.5cm}
		\small
		\tableofcontents
	\end{minipage}
\end{center}

\medskip

\section{Introduction}

\noindent
Let $p\in (1,\infty)$ and $s\in (0,1)$.
Let $\Omega\subset {\mathbb R}^N$ be a smooth bounded domain with $N>sp$.
We shall consider the following nonlocal quasilinear singular problem
\begin{equation}
\label{prob}
\begin{cases}
(- \Delta)_p^s\, u =\frac{f(x)}{u^\gamma} & \text{in $\Omega$},  \\
u>0 & \text{in $\Omega$},  \\
u  = 0  & \text{in $\mathbb{R}^N \setminus \Omega$}.
\end{cases}
\end{equation}
where $(-\Delta)_p^s$ is the fractional $p$-Laplacian operator, formally defined by
\begin{equation*}
(- \Delta)^s_p\, u(x):= 2\, \lim_{\varepsilon \searrow 0} \int_{\mathbb{R}^N \setminus B_\varepsilon(x)}
\frac{|u(x) - u(y)|^{p-2}\, (u(x) - u(y))}{|x - y|^{N+s\,p}}\, dy, \qquad x \in \mathbb{R}^N.
\end{equation*}
The aim of the paper is to prove the existence and the uniqueness of the solution to \eqref{prob}.
\vskip4pt

Let us first discuss the semilinear local case, $s=1$, $p=2$. In this setting the study of singular elliptic equations goes back to
 the pioneering work \cite{crandall}. Avoiding to  disclose the discussion,  we only mention here the contributions in \cite{boccardo,C23,CanDeg,CGS,gatica,saccon,kaw,lair,LM,stuart}, that settled up the issue in the semi-linear local case.
 The reader could be interested in obsarving that, in this case, by a simple change of variables it follows that the problem  is also related to problems involving a first order term of the type $\frac{|\nabla u|^2}{u}$. We refer the readers to \cite{arcoya,brandolini,giachetti} for related results in this setting.
To deal with singular problems, we have to face the fact that solutions are not in general in classical Sobolev spaces, because of the lack of regularity near the boundary. If already we consider the semi-linear local case it has been shown in \cite{LM}
that the solution cannot belong to $H^1_0(\Omega)$ if $\gamma\geqslant 3$.
Let us now state our result. Note that, due to the lack of regularity of the solutions near the boundary, the notion of solution has to be understood in the weak distributional meaning, for test functions compactly supported in the domain. Furthermore, the nonlocal nature of the operator has to be taken into account. Having this remarks in mind, the basic definition of solution can be formulated in the following:
\begin{definition}
	A function $u \in W^{s,p}_{{\rm loc}}(\Omega)\cap L^{p-1}(\Omega)$ is a weak solution to problem \eqref{prob} if
	$$
	u^{\max\{\frac{\gamma+p-1}{p},1\}}\in W^{s,p}_0(\Omega),\quad\,\,\, \frac{f(x)}{u^\gamma} \in L^{1}_{{\rm loc}}(\Omega),
	$$
	and we have
	\begin{equation}
		\label{v-f}
		\int_{\mathbb{R}^{2N}} \frac{|u(x) - u(y)|^{p-2}\, (u(x) - u(y))\, (\varphi(x) - \varphi(y))}{|x - y|^{N+s\,p}}\, dx\, dy
		=\int_{\Omega} \frac{f(x)}{u^\gamma}\,\varphi\, dx,
	\end{equation}
	for every $\varphi \in C^\infty_c(\Omega)$.
\end{definition}
According to such definition we have:
\begin{theorem}[Existence]
	\label{main thm existence}
Let $0<\gamma\leq 1$ and assume that
\begin{equation*}
 f\in L^{m}(\Omega),\quad
m:=\frac{Np}{N(p-1)+sp+\gamma(N-sp)}.
\end{equation*}
Then \eqref{prob} has a weak solution $u\in W^{s,p}_0(\Omega)$ with  ${\rm{ess inf}}_K\, u>0$
	for any compact $K\Subset\Omega$.\\
\noindent If  $\gamma>1$ and  $f\in L^{1}(\Omega)$, then \eqref{prob} has a weak a solution $u \in W^{s,p}_{{\rm loc}}(\Omega)\cap L^{p-1}(\Omega)$
	such that $u^{(\gamma+p-1)/p}\in W^{s,p}_0(\Omega)$ and ${\rm{ess inf}}_K\, u>0$
	for any compact $K\Subset\Omega$.
\end{theorem}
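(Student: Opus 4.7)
The plan is to construct $u$ as the monotone limit of solutions to regularized problems. For $n\in\mathbb{N}$ I would consider
\[
(-\Delta)_p^s u_n = \frac{f_n(x)}{(u_n+1/n)^{\gamma}}\quad\text{in }\Omega,\qquad u_n=0\text{ on }\mathbb{R}^N\setminus\Omega,
\]
with $f_n:=\min\{f,n\}$. Since the nonlinearity is bounded and strictly decreasing in $u_n$, existence of a nonnegative $u_n\in W^{s,p}_0(\Omega)\cap L^\infty(\Omega)$ follows by a Schauder fixed point argument applied to $v\mapsto u$, where $u$ solves $(-\Delta)^s_p u=f_n/(v^{+}+1/n)^{\gamma}$; the $L^\infty$ bound is supplied by a Moser/Stampacchia truncation. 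A nonlocal weak comparison principle gives that $\{u_n\}$ is pointwise nondecreasing, and a suitable sub-solution (for instance a small multiple of the first eigenfunction of $(-\Delta)^s_p$, or a strong minimum principle applied to $u_1$) yields $u_n\ge u_1\ge c_K>0$ on every compact $K\Subset\Omega$.

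Next I would establish uniform a priori estimates, distinguishing the two regimes. For $0<\gamma\le 1$, choosing $\varphi=u_n$ in the weak formulation produces $[u_n]^p_{s,p}$ on the left and $\int_\Omega f\,u_n^{1-\gamma}\,dx$ on the right. The exponent $m$ in the hypothesis is precisely that for which H\"older combined with the fractional Sobolev embedding $W^{s,p}_0(\Omega)\hookrightarrow L^{p^{\ast}_{s}}(\Omega)$, with $p^{\ast}_{s}=Np/(N-sp)$, gives $(1-\gamma)m'=p^{\ast}_{s}$; this yields $[u_n]^p_{s,p}\le C\|f\|_m\,[u_n]^{1-\gamma}_{s,p}$ and hence a uniform $W^{s,p}_0$ bound. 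For $\gamma>1$, the natural test function is a truncation of $u_n^{\gamma}$, and the key tool is the nonlocal algebraic inequality
\[
|a-b|^{p-2}(a-b)(a^{\gamma}-b^{\gamma})\ge c_{\gamma,p}\bigl|a^{(\gamma+p-1)/p}-b^{(\gamma+p-1)/p}\bigr|^{p},\qquad a,b\ge 0,
\]
which transforms the Gagliardo double integral into $[u_n^{(\gamma+p-1)/p}]^p_{s,p}$, while the right-hand side is controlled by $\|f\|_1$. Removing the truncation delivers a uniform bound for $u_n^{(\gamma+p-1)/p}$ in $W^{s,p}_0(\Omega)$, from which the $L^{p-1}(\Omega)$ control of $u_n$ follows via the Sobolev embedding.

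To finish, monotone convergence gives $u_n\uparrow u$ pointwise, with $u$ in the asserted function space by lower semicontinuity of the relevant Gagliardo seminorm. On each compact $K\Subset\Omega$ the singular term is dominated by $f/c_K^{\gamma}\in L^1(K)$, so the right-hand side of \eqref{v-f} passes to the limit by dominated convergence for every $\varphi\in C^\infty_c(\Omega)$. Local $W^{s,p}$ estimates on $u_n$, obtained by testing against localized cut-offs together with the uniform positivity $u_n\ge c_K$, allow one to identify the nonlocal bilinear form in the limit. The principal technical obstacle is the regime $\gamma>1$: the test function $u_n^{\gamma}$ is not a priori admissible and must be approached through a careful truncation and limiting argument, and the nonlocal algebraic inequality displayed above is the essential device relating the singular double integral to $[u_n^{(\gamma+p-1)/p}]^p_{s,p}$. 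A secondary delicate point is the uniform-on-compacts lower bound $c_K>0$, which rests on a nonlocal strong minimum principle for $(-\Delta)^s_p$.
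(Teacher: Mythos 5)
Your proposal follows the same overall strategy as the paper: regularize with $f_n=\min\{f,n\}$ and the shift $1/n$, prove existence of $u_n\in W^{s,p}_0(\Omega)\cap L^\infty(\Omega)$ by Schauder, obtain monotonicity and a uniform lower bound on compacts via a comparison argument and the strong maximum principle (the paper uses the global H\"older regularity of Iannizzotto--Mosconi--Squassina to apply the maximum principle to $u_1$), derive the a~priori estimates, and pass to the limit using the uniform interior positivity and Vitali/dominated convergence. For $0<\gamma\le 1$ your calculation with the exponent $(1-\gamma)m'=p_s^*$ is exactly the paper's. The one genuine difference is the $\gamma>1$ a~priori estimate: you test directly with (a truncation of) $u_n^{\gamma}$ and invoke the pointwise algebraic inequality
\[
|a-b|^{p-2}(a-b)\,(a^{\gamma}-b^{\gamma})\;\ge\;c_{\gamma,p}\,\bigl|a^{(\gamma+p-1)/p}-b^{(\gamma+p-1)/p}\bigr|^{p},
\]
whereas the paper instead tests with $\Phi(u_n)=u_n^{(\gamma+p-1)/p}$ and uses the Brasco--Parini convexity inequality for $(-\Delta)_p^s$ (their Proposition~3.3), together with the bound $|\Phi'(u_n)|^{p-1}\Phi(u_n)\le Cu_n^{\gamma}$. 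Both routes land in the same estimate $[\,u_n^{(\gamma+p-1)/p}\,]_{W^{s,p}}^p\lesssim\|f\|_{L^1}$; yours is more elementary and self-contained (it avoids invoking an external convexity theorem for the nonlocal operator), while the paper's is arguably cleaner once the Brasco--Parini result is available. One small inaccuracy worth noting: you flag the admissibility of $u_n^{\gamma}$ as requiring truncation, but in fact $u_n\in L^\infty(\Omega)$ already (from the $L^\infty$ bound on the right-hand side of $(\mathcal{P}_n)$ via the summability lemma), so $u_n^{\gamma}$, being a Lipschitz function of the bounded $u_n$, lies directly in $W^{s,p}_0(\Omega)\cap L^\infty(\Omega)$ without any truncation. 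Finally, your treatment of the passage to the limit for $\gamma>1$ is somewhat compressed; the delicate point, which the paper handles with a splitting $\mathbb{R}^{2N}=\mathcal K\cup(\mathbb{R}^{2N}\setminus\mathcal K)$ combined with the algebraic inequality $|x^{q}-y^{q}|\ge\varepsilon^{q-1}|x-y|$ and Vitali's theorem, is that $u_n$ is only bounded in $W^{s,p}_{\rm loc}$ so the convergence of the nonlocal form requires a genuine tail/Vitali argument rather than weak convergence in $W^{s,p}_0$; your phrase ``local $W^{s,p}$ estimates \dots allow one to identify the bilinear form'' gestures at this but would need to be made precise.
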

Actually the proof of Theorem \ref {main thm existence} will be carried out considering first  the simplest case $0<\gamma\leq 1$ (see Theorem \ref{thm:exgamma<=1}) and then the case $\gamma>1$ (see Theorem \ref{thm:exgamma>1}).
The proof relies on the well established technique introduce in \cite{boccardo}. Actually, via Shauder fixed point theorem, we find a solution to a regularized problem and then we perform a-priori uniform estimates to pass to the limit. Such a procedure has been investigated in the  nonlocal case for  $p=2$ in
 \cite{bbmp}, where a slightly different very weak notion of solution is considered that is allowed by the fact that the operator is linear and admits a double integration by parts. Since this is not the case for $p\neq 2$, we need a different approach which is rather technical and will be clear to the reader while reading the paper.

\vskip2pt
\noindent
Let us now turn to the uniqueness of the solution.
Since the way of understanding the boundary condition is not unambiguous, we start with the following:

\begin{definition}\label{rediri}
Let $u$ be such that $u  = 0$ in    $\mathbb{R}^N \setminus \Omega$.
We say
that $u\leq 0$ on $\partial\Omega$ if, for every $\e>0$, it follows that
\[
(u-\e)^+\in W^{s,p}_0(\Omega)\,.
\]
We will say that $u= 0$ on $\partial\Omega$ if $u$ is nonnegative  and $u\leq 0$ on $\partial\Omega$.
\end{definition}
Adopting such definition,  we will prove the following uniqueness result:
\begin{theorem}[Uniqueness]
	\label{main}
Let $\gamma>0$ and let $f\in L^1(\Omega)$ be  non-negative.
Then, under zero Dirichlet boundary conditions in the sense of Definition \ref{rediri}, the solution to \eqref{prob} is unique.
\end{theorem}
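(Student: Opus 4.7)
The plan is to test the difference of the weak formulations of the two solutions with a suitable truncation of $(u_1-u_2)^+$, and to exploit two monotonicities: the pointwise monotonicity of $\xi\mapsto|\xi|^{p-2}\xi$, which makes the nonlocal Gagliardo form a monotone operator on $W^{s,p}_0(\Omega)$, and the strict decreasing character of $t\mapsto t^{-\gamma}$ for $t>0$. Let $u_1,u_2$ be two solutions of \eqref{prob} satisfying the boundary condition of Definition \ref{rediri}. For $\varepsilon,k>0$ introduce
\[
\psi_{\varepsilon,k}:=\min\bigl\{(u_1-u_2-\varepsilon)^+,\,k\bigr\}.
\]
The goal is to use $\psi_{\varepsilon,k}$ as a test function in \eqref{v-f} written for $u_1$ and for $u_2$, subtract, and infer $\psi_{\varepsilon,k}\equiv 0$.

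The first step is to verify admissibility. Since $u_i=0$ on $\partial\Omega$ in the sense of Definition \ref{rediri}, $(u_1-\varepsilon)^+\in W^{s,p}_0(\Omega)$; combined with the lattice bound $0\le\psi_{\varepsilon,k}\le\min\{(u_1-\varepsilon)^+,k\}$ this yields $\psi_{\varepsilon,k}\in W^{s,p}_0(\Omega)\cap L^\infty(\Omega)$. On $\{\psi_{\varepsilon,k}>0\}$ one has $u_1>\varepsilon$, so the term $(f/u_1^\gamma)\psi_{\varepsilon,k}$ is dominated by $(k/\varepsilon^\gamma)f\in L^1(\Omega)$. The symmetric term against $u_2$, where $u_2$ may approach zero far from a compact core, is handled by approximating $\psi_{\varepsilon,k}$ by functions $\xi_n\psi_{\varepsilon,k}$ with compactly supported cutoffs $\xi_n\uparrow 1$, using the interior lower bound $\mathrm{essinf}_K u_2>0$ on every compact $K\Subset\Omega$ together with $f/u_2^\gamma\in L^1_{\mathrm{loc}}(\Omega)$ to pass to the limit by monotone convergence, while convergence of the Gagliardo form follows from $\xi_n\psi_{\varepsilon,k}\to\psi_{\varepsilon,k}$ in $W^{s,p}_0(\Omega)$.

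With $\psi_{\varepsilon,k}$ admissible, subtract the two tested equations. Setting $J_p(t):=|t|^{p-2}t$, the left-hand side is
\[
\int_{\mathbb{R}^{2N}}\frac{\bigl[J_p(u_1(x)-u_1(y))-J_p(u_2(x)-u_2(y))\bigr]\bigl[\psi_{\varepsilon,k}(x)-\psi_{\varepsilon,k}(y)\bigr]}{|x-y|^{N+sp}}\,dx\,dy,
\]
which is nonnegative by the usual region-by-region discrete monotonicity argument, obtained by splitting the integration domain according to which of $\psi_{\varepsilon,k}(x),\psi_{\varepsilon,k}(y)$ are positive. The right-hand side reduces to $\int_\Omega f(u_1^{-\gamma}-u_2^{-\gamma})\psi_{\varepsilon,k}\,dx$, and is $\le 0$ because on $\{\psi_{\varepsilon,k}>0\}$ one has $u_1>u_2$ and $t\mapsto t^{-\gamma}$ is strictly decreasing. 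Both sides therefore vanish; combined with $\psi_{\varepsilon,k}\in W^{s,p}_0(\Omega)$, the standard strict-monotonicity argument then forces $\psi_{\varepsilon,k}\equiv 0$, i.e.\ $u_1\le u_2+\varepsilon$ a.e. Letting $k\to\infty$ and then $\varepsilon\to 0$, and interchanging the roles of $u_1$ and $u_2$, we conclude $u_1=u_2$.

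The main obstacle is the admissibility step: the weak formulation \eqref{v-f} is only assumed for $\varphi\in C_c^\infty(\Omega)$, whereas $\psi_{\varepsilon,k}$ lies in $W^{s,p}_0(\Omega)\cap L^\infty(\Omega)$ but is not in general compactly supported in $\Omega$. Extending \eqref{v-f} to such $\varphi$ requires an approximation scheme that simultaneously controls the nonlocal quadratic form and the singular nonlinearity under only the local integrability $f/u_i^\gamma\in L^1_{\mathrm{loc}}(\Omega)$. This is where the boundary condition of Definition \ref{rediri} and the interior lower bound on $u_i$ play an indispensable role, and where the nonlinearity $p\neq 2$ genuinely complicates matters compared to the $p=2$ case of \cite{bbmp}.
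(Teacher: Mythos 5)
Your plan runs into a genuine and, I think, fatal gap precisely at the step you flag as "the main obstacle": admissibility of the test function. The claim that the pointwise bound $0\le\psi_{\varepsilon,k}\le\min\{(u_1-\varepsilon)^+,k\}$ forces $\psi_{\varepsilon,k}\in W^{s,p}_0(\Omega)$ is false. Membership in $W^{s,p}_0(\Omega)$ is equivalent (for a function vanishing outside $\Omega$) to finiteness of the global Gagliardo seminorm, and a pointwise bound gives no control on $|\psi_{\varepsilon,k}(x)-\psi_{\varepsilon,k}(y)|$. Since truncation is $1$-Lipschitz, $|\psi_{\varepsilon,k}(x)-\psi_{\varepsilon,k}(y)|$ is controlled by $|(u_1-u_2)(x)-(u_1-u_2)(y)|$, and when $\gamma>1$ the solutions are only in $W^{s,p}_{\rm loc}(\Omega)$, so $[u_1-u_2]_{W^{s,p}(\Omega)}$ may well be infinite near $\partial\Omega$. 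The hypothesis $(u_i-\varepsilon)^+\in W^{s,p}_0(\Omega)$ controls each $u_i$ only above level $\varepsilon$, and on $\{u_1>u_2+\varepsilon\}$ the function $u_2$ can still be close to $0$, so you cannot rewrite $(u_1-u_2-\varepsilon)^+$ as a difference of two $W^{s,p}_0$ objects. Your proposed fix via cutoffs $\xi_n\psi_{\varepsilon,k}$ presupposes exactly what is missing: convergence of the nonlocal form $\int|u_2(x)-u_2(y)|^{p-2}(u_2(x)-u_2(y))(\varphi(x)-\varphi(y))|x-y|^{-N-sp}$ when the test function loses compact support would require the kernel $|u_2(x)-u_2(y)|^{p-1}|x-y|^{-(N+sp)/p'}$ to lie in $L^{p'}(\mathbb{R}^{2N})$, i.e.\ $u_2\in W^{s,p}_0(\Omega)$, which is precisely what you do not have for $\gamma>1$.

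The paper's proof circumvents this by never testing the weak formulation of the supersolution $v$ against a non-compactly-supported function. Instead it builds an auxiliary $w\in W^{s,p}_0(\Omega)$ as the minimizer of a penalized Dirichlet-type functional $J_k$ on the convex set $\{0\le\varphi\le v\}\cap W^{s,p}_0(\Omega)$, derives a variational inequality (Lemma~\ref{lemmause}) for $w$ that is by construction valid against a suitable test class, and then compares the subsolution $u$ with $w$ rather than with $v$. The crucial identity is $(u-w-\varepsilon)^+=((u-\varepsilon)^+-w)^+$, which is in $W^{s,p}_0(\Omega)$ because \emph{both} $(u-\varepsilon)^+$ and $w$ are; this is where Definition~\ref{rediri} enters, and where the argument genuinely uses that $w$ (not $v$) is a global Sobolev function. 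The conclusion $u\le w+\varepsilon\le v+\varepsilon$ then gives the comparison principle, and uniqueness follows by symmetry. Your sketch captures the easy monotonicity input (region-by-region positivity of the nonlocal form and strict decrease of $t\mapsto t^{-\gamma}$), but the entire auxiliary-minimizer machinery, which is the actual content of Section~\ref{kduhfkuer}, is absent, and without something of that sort the argument does not close.
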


\noindent
It is worth emphasizing that  the proof of Theorem \ref{main} will follow via a more general comparison principle.\\

\noindent Having in mind Theorem \ref{main thm existence}, one may say that $u$ has zero Dirichlet boundary datum if
\begin{equation}\label{defsol2}
u^{\max\{\frac{\gamma+p-1}{p},1\}}\in W^{s,p}_0(\Omega).
\end{equation}
Our result applies in this case too since we have the following
\begin{proposition}\label{mainappos}
Let $\gamma>0$ and let $u$ be  non-negative with $u^{\max\{\frac{\gamma+p-1}{p},1\}}\in W^{s,p}_0(\Omega).$
Then $u$ fulfills zero  Dirichlet boundary conditions in the sense of Definition \ref{rediri}.
\end{proposition}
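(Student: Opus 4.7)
The plan is to realize $(u-\varepsilon)^+$ as a Lipschitz composition of $v:=u^{\alpha}$ with $\alpha:=\max\{(\gamma+p-1)/p,1\}\geq 1$—which belongs to $W^{s,p}_0(\Omega)$ by hypothesis—and then to conclude by a density and weak-closure argument. Since $u\geq 0$, we have $v\geq 0$ and $u=v^{1/\alpha}$, so $(u-\varepsilon)^+=\psi_\varepsilon(v)$ where $\psi_\varepsilon:\mathbb{R}\to\mathbb{R}$ is defined by $\psi_\varepsilon(t):=(t^{1/\alpha}-\varepsilon)^+$ for $t\geq 0$ and $\psi_\varepsilon(t):=0$ for $t\leq 0$. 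The crucial observation, and the very reason for the $\max$ in the exponent of the hypothesis, is that $\alpha\geq 1$ makes the derivative $(1/\alpha)t^{1/\alpha-1}$ non-increasing on $(0,\infty)$; combined with the truncation at the level $t=\varepsilon^\alpha$, this yields that $\psi_\varepsilon$ is globally Lipschitz on $\mathbb{R}$ with constant $L_\varepsilon:=(1/\alpha)\varepsilon^{1-\alpha}$, vanishes on $(-\infty,\varepsilon^\alpha]$, and satisfies $\psi_\varepsilon(0)=0$.

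Next, choose $(v_n)\subset C^\infty_c(\Omega)$ with $v_n\to v$ in $W^{s,p}(\mathbb{R}^N)$ and, up to a subsequence, a.e.\ in $\mathbb{R}^N$; set $w_n:=\psi_\varepsilon(v_n)$. The composition is Lipschitz and vanishes where $v_n\leq\varepsilon^\alpha$, so $\mathrm{supp}(w_n)\subset\{v_n\geq\varepsilon^\alpha\}$ is a closed subset of $\mathrm{supp}(v_n)$, hence compactly contained in $\Omega$; standard mollification on this fixed compact support then gives $w_n\in W^{s,p}_0(\Omega)$. The pointwise bound $|\psi_\varepsilon(a)-\psi_\varepsilon(b)|\leq L_\varepsilon|a-b|$ translates into the Gagliardo-seminorm inequality $[w_n]^p_{W^{s,p}(\mathbb{R}^N)}\leq L_\varepsilon^p\,[v_n]^p_{W^{s,p}(\mathbb{R}^N)}$ and into $\|w_n\|_{L^p(\mathbb{R}^N)}\leq L_\varepsilon\|v_n\|_{L^p(\mathbb{R}^N)}$, so that $(w_n)$ is bounded in $W^{s,p}(\mathbb{R}^N)$.

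By reflexivity a subsequence of $(w_n)$ converges weakly in $W^{s,p}(\mathbb{R}^N)$; since $v_n\to v$ a.e.\ and $\psi_\varepsilon$ is continuous, the weak limit must coincide with the pointwise limit $\psi_\varepsilon(v)=(u-\varepsilon)^+$. Because $W^{s,p}_0(\Omega)$ is norm-closed in $W^{s,p}(\mathbb{R}^N)$, hence weakly closed, we conclude $(u-\varepsilon)^+\in W^{s,p}_0(\Omega)$. Since $\varepsilon>0$ was arbitrary and $u\geq 0$, this is precisely the meaning of $u=0$ on $\partial\Omega$ in the sense of Definition \ref{rediri}. I expect the main obstacle to be the Lipschitz step: a direct bound on $t\mapsto t^{1/\alpha}$ near $0$ is only H\"older when $\alpha<1$, which would spoil the seminorm comparison for $w_n$; the $\max$ appearing in the exponent is exactly the structural feature that saves the composition–weak-closure argument.
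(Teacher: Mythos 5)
Your argument is correct and, at its core, uses the same elementary fact as the paper: for $\alpha\geq 1$ and arguments with maximum at least $\varepsilon^\alpha$, the map $t\mapsto t^{1/\alpha}$ is Lipschitz with constant of order $\varepsilon^{1-\alpha}$, which is precisely the reverse-Lipschitz content of Lemma~\ref{lem:dino} after the substitution $(a,b)=(x^q,y^q)$. The packaging, however, is genuinely different. You realize $(u-\varepsilon)^+$ as a globally Lipschitz composition $\psi_\varepsilon(u^\alpha)$ with $\psi_\varepsilon(0)=0$, approximate $u^\alpha$ by $C^\infty_c(\Omega)$ functions, compose, and invoke weak closedness of $W^{s,p}_0(\Omega)$. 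The paper instead works directly on the double integral: it restricts to $\mathcal Q_\varepsilon=\mathbb{R}^{2N}\setminus(\mathcal S_\varepsilon^c\times\mathcal S_\varepsilon^c)$ where one of $u(x),u(y)$ exceeds $\varepsilon$, uses $1$-Lipschitzness of $t\mapsto(t-\varepsilon)^+$ to bound $[(u-\varepsilon)^+]^p$ by the $u$-seminorm over $\mathcal Q_\varepsilon$, and then applies Lemma~\ref{lem:dino} to dominate this by $\varepsilon^{1-\gamma}\,[u^{(\gamma+p-1)/p}]^p$. Your approximation/weak-closure detour is not strictly needed (once one knows $\psi_\varepsilon$ is globally Lipschitz with $\psi_\varepsilon(0)=0$ and $v\in W^{s,p}_0(\Omega)$, the composition $\psi_\varepsilon(v)$ lies in $W^{s,p}_0(\Omega)$ by the very definition of the space, since it has finite Gagliardo seminorm, lies in $L^p$, and vanishes outside $\Omega$), but it does have the minor merit of delivering the $L^p(\mathbb{R}^N)$-membership of $(u-\varepsilon)^+$ for free, a point the paper's proof leaves implicit.

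One correction to your closing remark: it is $\alpha>1$, not $\alpha<1$, that makes $t\mapsto t^{1/\alpha}$ merely H\"older (of order $1/\alpha<1$) near the origin, which is precisely why the truncation at level $\varepsilon^\alpha$ is needed. For $\alpha<1$ the map $t^{1/\alpha}$ is $C^1$ at $0$ with vanishing derivative, but its derivative blows up at infinity, so $\psi_\varepsilon$ would fail to be globally Lipschitz for a different reason. In both readings the $\max$ in the exponent serves the same purpose, namely to guarantee $\alpha\geq 1$, so your intuition about its structural role is right even if the stated explanation is reversed.
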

\begin{proof}We only need to prove the result in the case $\gamma>1$.
For $\e>0$ let us set
\[
\mathcal S_\e\,:={\rm supp} \,(u-\e)^+\qquad \mathcal Q_\e\,:=\,\mathbb{R}^{2N}\setminus \big(\mathcal S_\e^c\times\mathcal S_\e^c\big)\,.
\]
By Lemma \ref{vicesindaco} with $q=\frac{\gamma+p-1}{p}$, we have that
\begin{equation}\nonumber
\big|u^{\frac{\gamma+p-1}{p}}(x)-u^{\frac{\gamma+p-1}{p}}(y)\big|^p\,\geq\, \e^{\gamma-1} |u(x)-u(y)|^p\qquad\text{in $\mathcal Q_\e$},
\end{equation}
since either $u(x)\geq\e$ in $\mathcal Q_\e$ or $u(y)\geq\e$ in $\mathcal Q_\e$. From this we easily infer that
\begin{equation}\nonumber
\begin{split}
\int_{\mathbb{R}^{2N}}\frac{|(u-\e)^+(x)-(u-\e)^+(y)|^p}{|x - y|^{N+s\,p}}\,dx\,dy&\leq
\int_{\mathcal Q_\e}\frac{|u(x)-u(y)|^p}{|x - y|^{N+s\,p}}\,dx\,dy\\
&\leq \e^{1-\gamma}
\int_{\mathbb{R}^{2N}}\frac{|u^{\frac{\gamma+p-1}{p}}(x)-u^{\frac{\gamma+p-1}{p}}(y)|^p}{|x - y|^{N+s\,p}}\,dx\,dy,
\end{split}
\end{equation}
which concludes the proof.
\end{proof}

\noindent
By Theorem \ref{main}, thanks to Proposition \ref{mainappos}, we  deduce in fact a more general uniqueness result:
\begin{theorem}[Uniqueness]
	\label{main2}
Let $\gamma>0$ and let $f\in L^1(\Omega)$ be  non-negative and  let $u, v$ be weak solutions to \eqref{prob}.
Assume that $u$ and $v$ have zero Dirichlet boundary datum either in the sense of Definition \ref{rediri} or in the sense expressed in \eqref{defsol2}.
Then $u\equiv v$.
\end{theorem}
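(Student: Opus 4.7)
The plan is essentially to stitch together the two previous results: Proposition \ref{mainappos} converts the energy-type boundary condition \eqref{defsol2} into the Definition \ref{rediri} sense, and then Theorem \ref{main} delivers uniqueness directly. So there is no new analytical content — the theorem is a bookkeeping corollary that records all the admissible notions of ``zero Dirichlet datum'' under which uniqueness holds.

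Concretely, I would proceed as follows. Suppose $u,v$ are two weak solutions of \eqref{prob}, each satisfying the boundary condition in one of the two admissible senses. First I would normalize the hypothesis: for whichever of $u,v$ is given via \eqref{defsol2}, i.e.\ with $u^{\max\{(\gamma+p-1)/p,1\}}\in W^{s,p}_0(\Omega)$, I apply Proposition \ref{mainappos} to conclude that this same function also satisfies $u=0$ on $\partial\Omega$ in the sense of Definition \ref{rediri} (the nonnegativity needed in that definition is part of the weak solution hypothesis since $u>0$ in $\Omega$ and $u=0$ on $\mathbb R^N\setminus\Omega$). After this preliminary step, both $u$ and $v$ satisfy zero Dirichlet boundary conditions in the sense of Definition \ref{rediri}.

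At this point the hypotheses of Theorem \ref{main} are met — $\gamma>0$ and $f\in L^1(\Omega)$ is nonnegative — so its conclusion gives $u\equiv v$, finishing the proof.

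There is really no obstacle here: the only conceptual point is verifying that Proposition \ref{mainappos} applies uniformly to both ranges $0<\gamma\le 1$ and $\gamma>1$ (for $\gamma\le 1$ the exponent $\max\{(\gamma+p-1)/p,1\}$ can equal $1$, so $u\in W^{s,p}_0(\Omega)$ directly yields $(u-\varepsilon)^+\in W^{s,p}_0(\Omega)$ by the standard truncation lemma for the Gagliardo seminorm, while for $\gamma>1$ we invoke Proposition \ref{mainappos} as stated). Once this is noted, the proof is a one-line reduction.
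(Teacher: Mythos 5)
Your proof is correct and is precisely the paper's intended argument: the paper introduces Theorem~\ref{main2} with the phrase ``By Theorem~\ref{main}, thanks to Proposition~\ref{mainappos}, we deduce,'' and gives no separate proof. Your reduction — convert the \eqref{defsol2} condition into the Definition~\ref{rediri} sense via Proposition~\ref{mainappos} (trivially for $0<\gamma\le 1$, as the proposition's own proof notes), then invoke Theorem~\ref{main} — is exactly that.
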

The technique used in the proof of the uniqueness result goes back to \cite{CanDeg} where the uniqueness of the solution is implicitly proved in the case
$f=1$, $p=2$ and $s=1$. Such a technique
was already improved in \cite{ccmcan} in the local semilinear case. In this setting it is worth mentioning the recent result in \cite{oliva} where singular problems with measure data are considered.
The local quasilinear case $p\neq 2$ was considered with a different technique in \cite{nodt}. For the nonlocal case we mention a related uniqueness result in \cite{fang}, where the case $f=1$ and $p=2$ is considered among other problems.\\

We end the introduction pointing out a first simple consequence of the uniqueness result:
\begin{theorem}[Symmetry]
	\label{Symmetry}
Let $u$ be the solution to \eqref{prob} under zero Dirichlet boundary condition.
Assume that the domain $\Omega$ is symmetric with respect to some hyperplane
$$
T_\lambda^\nu\,:=\,\{x\cdot\nu=\lambda\},\quad \lambda\in\R,\quad \nu\in S^{N-1}.
$$
Then, if $f$ is symmetric with respect to the hyperplane $T_\lambda^\nu$, then $u$ is symmetric with respect to the hyperplane $T_\lambda^\nu$ too.
In particular, if $\Omega$ is a ball or an annulus centered at the origin and $f$ is radially symmetric, then $u$ is radially symmetric.
\end{theorem}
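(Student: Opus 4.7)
The plan is to obtain symmetry as a direct corollary of the uniqueness Theorem \ref{main2} combined with the invariance of the fractional $p$-Laplacian formulation under isometries. Let $\sigma:\mathbb{R}^N\to\mathbb{R}^N$ denote the orthogonal reflection with respect to $T_\lambda^\nu$, namely $\sigma(x)=x-2(x\cdot\nu-\lambda)\nu$, and set $v(x):=u(\sigma(x))$. Since $\sigma$ is an involutive isometry, $|\sigma(x)-\sigma(y)|=|x-y|$ for all $x,y$, and the Jacobian of $\sigma$ is $1$.

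First I would verify that $v$ is again a weak solution to \eqref{prob}. By the symmetry hypothesis $\sigma(\Omega)=\Omega$, hence $v=0$ on $\mathbb{R}^N\setminus\Omega$ and $v>0$ in $\Omega$; moreover $v\in W^{s,p}_{\rm loc}(\Omega)\cap L^{p-1}(\Omega)$ because $\sigma$ maps compacts to compacts and preserves the Gagliardo seminorm. Given any $\varphi\in C^\infty_c(\Omega)$, the map $\tilde\varphi:=\varphi\circ\sigma$ is also in $C^\infty_c(\Omega)$. Performing the change of variables $x\mapsto \sigma(x)$, $y\mapsto \sigma(y)$ in \eqref{v-f} tested against $\tilde\varphi$, using $|\sigma(x)-\sigma(y)|=|x-y|$ and the symmetry of $f$ (so that $f(\sigma(x))=f(x)$), one obtains
\begin{equation*}
\int_{\mathbb{R}^{2N}} \frac{|v(x) - v(y)|^{p-2}(v(x) - v(y))(\varphi(x) - \varphi(y))}{|x - y|^{N+sp}}\, dx\, dy = \int_{\Omega} \frac{f(x)}{v^\gamma}\,\varphi\, dx,
\end{equation*}
which is precisely the weak formulation for $v$.

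Next I would check that $v$ satisfies the same boundary condition as $u$. If $u^{\max\{(\gamma+p-1)/p,1\}}\in W^{s,p}_0(\Omega)$, then the same holds for $v^{\max\{(\gamma+p-1)/p,1\}}$ because composition with the isometry $\sigma$ preserves both the support condition and the Gagliardo seminorm (and commutes with taking powers), hence one can approximate by the reflections of the approximating sequence in $C^\infty_c(\Omega)$. Therefore $v$ fulfills zero Dirichlet boundary datum in the sense of \eqref{defsol2}, which by Proposition \ref{mainappos} implies it in the sense of Definition \ref{rediri}.

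At this point both $u$ and $v$ are weak solutions of \eqref{prob} with zero Dirichlet boundary datum, so Theorem \ref{main2} yields $v\equiv u$, i.e.\ $u(\sigma(x))=u(x)$ almost everywhere, which is the desired symmetry. For the radial case, one applies the above to each hyperplane through the origin: if $\Omega$ is a ball or an annulus centered at $0$ and $f$ is radial, then $u$ is invariant under every such reflection, hence radial. The only mildly delicate point in this plan is the chain of change-of-variable manipulations needed to see that the reflected function inherits the boundary condition \eqref{defsol2} and lies in the correct functional class, but this is routine because the Gagliardo seminorm and $L^p$ norms are preserved by $\sigma$.
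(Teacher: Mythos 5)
Your proposal is essentially the paper's own argument: reflect $u$ across $T_\lambda^\nu$, use the invariance of the nonlocal energy and of the boundary condition under the isometry (together with the symmetry of $\Omega$ and $f$) to see that the reflected function is again a solution with zero Dirichlet datum, and conclude by the uniqueness theorem. The paper merely streamlines this by normalizing the hyperplane to $\{x_1=0\}$ and leaving the invariance checks implicit, whereas you spell them out, but the route is the same.
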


\noindent

\section{Approximations}

We denote by $B_r(x_0)$ the $N-$dimensional open ball of radius $r$, centered at a point $x_0\in\mathbb{R}^N$.
The symbol $\|\cdot\|_{L^p(\Omega)}$ stands for the standard norm for the $L^p(\Omega)$ space. For a measurable function $u:\mathbb{R}^N\to\mathbb{R}$, we let
\[
[u]_{D^{s,p}(\mathbb{R}^N)} := \left(\int_{\mathbb{R}^{2N}} \frac{|u(x) - u(y)|^p}{|x - y|^{N+sp}}\, dx dy\right)^{1/p}
\]
be its Gagliardo seminorm. We consider the space
$$
W^{s,p}(\mathbb{R}^N) := \big\{u \in L^{p}(\mathbb{R}^N) : [u]_{D^{s,p}(\mathbb{R}^N)}<\infty\big\},
$$
endowed with norm $\|\,\cdot\,\|_{L^p(\mathbb{R}^N)}+[\,\cdot\,]_{D^{s,p}(\mathbb{R}^N)}$. If $\Omega\subset\mathbb{R}^N$ is an open set, not necessarily bounded, we consider
\[
W^{s,p}_0(\Omega) := \big\{u \in W^{s,p}(\mathbb{R}^N): \text{$u=0$ in $\mathbb{R}^N\setminus\Omega$}\big\},
\]
endowed with norm $[\,\cdot\,]_{D^{s,p}(\mathbb{R}^N)}$.
The imbedding $W^{s,p}_0(\Omega)\hookrightarrow L^r(\Omega)$ is continuous
for $1\le r \le p_s^\ast$ and compact for $1\le r< p_s^\ast$, where  $p^*_s:={N\,p}/{(N-s\,p)}$.
The space $W^{s,p}_0(\Omega)$ can be equivalently defined as the completion
of $C^\infty_0(\Omega)$ in the norm $[\,\cdot\,]_{D^{s,p}(\mathbb{R}^N)}$, provided $\partial\Omega$ is smooth enough.
We shall denote the localized Gagliardo seminorm by
$$
[u]_{W^{s,p}(\Omega)}:
=\left(\int_{\Omega\times\Omega} \frac{|u(x)-u(y)|^p}{|x-y|^{N+s\,p}}\,dx\,dy\right)^{1/p}.
$$
Finally define the  space
$$
W^{s,p}_{{\rm loc}}(\Omega) := \big\{u \in L^{p}(K) : [u]_{W^{s,p}(K)}<\infty\big\}, \quad\text{for all $K\Subset\Omega$}.
$$
\noindent
We first state a lemma dealing with the existence and uniqueness of solutions to $(- \Delta)_p^s\, u =f$.

\begin{lemma}
	\label{well-p}
	Let $f \in L^{\infty}(\Omega)$ and  $f\geq 0$, $f\not \equiv 0$. Then the problem
\begin{equation}\label{prob-fbounded}
\begin{cases}
(- \Delta)_p^s\, u =f& \text{in $\Omega$},  \\
u>0 & \text{in $\Omega$},  \\
u  = 0  & \text{in $\mathbb{R}^N \setminus \Omega$,}
\end{cases}
\end{equation}
admits a unique solution  $u \in W^{s,p}_0(\Omega)$.
\end{lemma}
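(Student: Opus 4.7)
The plan is to apply the direct method of the calculus of variations to the strictly convex energy functional
\[
J(u):=\frac{1}{p}\int_{\mathbb{R}^{2N}}\frac{|u(x)-u(y)|^p}{|x-y|^{N+sp}}\,dx\,dy-\int_\Omega f(x)\,u(x)\,dx,\qquad u\in W^{s,p}_0(\Omega).
\]
First I would verify coercivity: by H\"older's inequality, the boundedness of $\Omega$, and the Sobolev embedding $W^{s,p}_0(\Omega)\hookrightarrow L^{p^*_s}(\Omega)$, one obtains
$J(u)\ge \tfrac{1}{p}[u]_{D^{s,p}(\mathbb{R}^N)}^{p}-C\|f\|_{L^\infty(\Omega)}[u]_{D^{s,p}(\mathbb{R}^N)}$,
which diverges to $+\infty$ as $[u]_{D^{s,p}(\mathbb{R}^N)}\to\infty$. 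Weak lower semicontinuity is immediate: the Gagliardo contribution is convex and norm-continuous, hence sequentially weakly lsc, while the linear term is weakly continuous thanks to the compact embedding $W^{s,p}_0(\Omega)\hookrightarrow L^p(\Omega)$. A minimizer $u\in W^{s,p}_0(\Omega)$ of $J$ therefore exists.

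Since $\bigl||a|-|b|\bigr|\le|a-b|$, replacing $u$ by $|u|$ does not increase the Gagliardo part of $J$, while $-\int_\Omega f|u|\,dx\le-\int_\Omega fu\,dx$ because $f\ge 0$; hence we may take the minimizer to satisfy $u\ge 0$. Computing $\tfrac{d}{dt}J(u+t\varphi)\big|_{t=0}=0$ for every $\varphi\in W^{s,p}_0(\Omega)$ recovers exactly the weak formulation of \eqref{prob-fbounded}. Choosing $\varphi_0\in C^\infty_c(\Omega)$ nonnegative with $\int_\Omega f\varphi_0\,dx>0$ (possible since $f\not\equiv 0$) gives $J(t\varphi_0)<0$ for small $t>0$, so the minimizer is not identically zero, and a nonlocal strong minimum principle for $(-\Delta)^s_p$ then yields $u>0$ a.e.\ in $\Omega$.

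For uniqueness I would invoke strict convexity of $J$. If $u_1\ne u_2$ in $W^{s,p}_0(\Omega)$, then $h:=u_1-u_2$ cannot be essentially constant on $\mathbb{R}^N$ (as it vanishes outside $\Omega$ and is not identically zero); by Fubini the set $\{(x,y):h(x)\ne h(y)\}$ has positive Lebesgue measure in $\mathbb{R}^{2N}$, and the strict convexity of $t\mapsto|t|^p$ for $p>1$ forces
\[
J\!\left(\tfrac{u_1+u_2}{2}\right)<\tfrac12\bigl(J(u_1)+J(u_2)\bigr),
\]
ruling out two distinct minimizers. Equivalently, one may test the difference of the two equations against $u_1-u_2$ and invoke the strict monotonicity of $(-\Delta)^s_p$ via the usual Simon-type inequality. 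The step I expect to be the main obstacle is the strict positivity $u>0$ in $\Omega$: in the nonlocal quasilinear setting the strong minimum principle is less elementary than its local counterpart and requires appealing to a genuine nonlocal Harnack or comparison theorem, whereas the remaining ingredients are by standard direct-method machinery.
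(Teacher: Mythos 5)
Your proof is correct and follows essentially the same route as the paper: direct minimization of the same energy $J$ on $W^{s,p}_0(\Omega)$, coercivity and weak lower semicontinuity via Sobolev embedding, replacing the minimizer by its absolute value to get nonnegativity, a nonlocal strong maximum principle (the paper cites Brasco--Franzina, Theorem~A.1) for strict positivity, and uniqueness by strict convexity of $J$, which is the variational face of the same strict-monotonicity/Simon-type inequality argument the paper carries out by testing the difference of the two weak formulations against $u_1-u_2$.
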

\begin{proof}
To prove the existence of a solution to \eqref{prob-fbounded}, we minimize the functional
\begin{equation*}
\mathcal J(u)=\frac{1}{p}\int_{\mathbb{R}^{2N}}\frac{|u(x)-u(y)|^p}{|x-y|^{N+ps}}\,dx\,dy-\int_{\Omega}f(x)\,u\,dx, \qquad u\in W^{s,p}_0(\Omega),
\end{equation*}
and then we look for a solution to \eqref{prob-fbounded} as a critical point of $\mathcal J (u)$. In fact, we have that
\begin{itemize}
\item[${\rm (i)}$] $\mathcal J(u)$ is coercive, since  by the Sobolev embedding it follows
\begin{equation}\nonumber
\mathcal J(u)\geq \frac 1p \int_{\mathbb{R}^{2N}}\frac{|u(x)-u(y)|^p}{|x-y|^{N+ps}}\,dx\,dy-
C\|f\|_{L^{\infty}(\Omega)}\left(\int_{\mathbb{R}^N}\frac{|u(x)-u(y)|^p}{|x-y|^{N+ps}}\,dx\,dy\right)^{\frac 1p}.
\end{equation}
\item[${\rm (ii)}$] $\mathcal J(u)$ is weakly lower semi-continuous in   $W^{s,p}_0(\Omega)$.
\end{itemize}
Then choosing a non negative  minimizing  sequence $\{u_n\}_{n\in{\mathbb N}}$ (and since $f\geq 0$ it is not restrictive to assume that  $u_n(x)\geq0$ a.e. in $\mathbb{R}^N$, if not take $\{|u_n(x)|\}_{n\in{\mathbb N}}$), the existence of a minimum of $\mathcal J$
and thus of a nonnegative solution $u$ to \eqref{prob-fbounded},  follows by a standard minimization procedure.
That $u>0$ follows by the strong maximum principle stated in \cite[Theorem A.1]{BF}.
We show now that the solution to problem \eqref{prob-fbounded} is unique.
Let us suppose that $u_1,u_2\in W^{s,p}_0(\Omega)$ are weak solutions to \eqref{prob-fbounded}. Therefore,
for all $\varphi\in W^{s,p}_0(\Omega)$ we have
\begin{eqnarray*}
&&\int_{\mathbb{R}^{2N}} \frac{|u_1(x) - u_1(y)|^{p-2}\, (u_1(x) - u_1(y))\, (\varphi(x) - \varphi(y))}{|x - y|^{N+s\,p}}\, dx\, dy
=\int_{\Omega}f(x)\,\varphi\, dx,\\
&&\int_{\mathbb{R}^{2N}} \frac{|u_2(x) - u_2(y)|^{p-2}\, (u_2(x) - u_2(y))\, (\varphi(x) - \varphi(y))}{|x - y|^{N+s\,p}}\, dx\, dy
=\int_{\Omega}f(x)\,\varphi\, dx.
\end{eqnarray*}
Subtracting the two equations yields
\begin{equation*}
\int_{\mathbb{R}^{2N}} \frac{\big (|u_1(x) - u_1(y)|^{p-2}\, (u_1(x) - u_1(y))-|u_2(x) - u_2(y)|^{p-2}\, (u_2(x) - u_2(y))\big)\, (\varphi(x) - \varphi(y))}{|x - y|^{N+s\,p}}=0.
\end{equation*}
Inserting $\varphi(x)=w(x):=u_1(x)-u_2(x)$, using  elementary inequalities (cf.\ \cite[Section 10]{peter}), yields
\begin{equation*}
\int_{\mathbb{R}^{2N}}\frac{(|u_1(x) - u_1(y)|+|u_2(x) - u_2(y)|)^{p-2}|w(x)-w(y)|^{2}}{|x - y|^{N+s\,p}}\, dx dy\leq 0,\quad\,\,\, \text{if $1<p<2$},
\end{equation*}
as well as
\begin{equation*}
\int_{\mathbb{R}^{2N}}\frac{|w(x)-w(y)|^{p}}{|x - y|^{N+s\,p}}\, dx dy\leq 0,\quad\,\,\, \text{if $p\geq  2$}.
\end{equation*}
In both cases the inequalities yield $w(x)=C$ for all $x\in {\mathbb R}^N$
and some constant $C\in {\mathbb R}$. Since $u_i=0$ on $\Omega^c$, we get $w=0$ on $\Omega^c$.
Therefore $C=0$ and the assertion follows.
\end{proof}

\noindent
Solutions of the problem $(- \Delta)_p^s u = f(x)$ enjoy the useful $L^q$-estimate (cf.\ \cite[Lemma 2.3]{MRmpsy}), that we state in the following 
\begin{lemma}[Summability lemma]
	\label{summab}
	Let $f \in L^q(\Omega)$ for some $1 < q \le \infty$ and assume that $u \in W^{s,p}_0(\Omega)$ is a
	weak solution of the equation $(- \Delta)_p^s\, u = f(x)$ in $\Omega$. Then
	\begin{equation*} 
	\pnorm[r]{u} \le C \pnorm[q]{f}^{1/(p-1)},
	\end{equation*}
	where
	\[
	r := \begin{cases}
	\dfrac{N\, (p - 1)\, q}{N - spq}, & 1 < q < \dfrac{N}{sp}, \\[10pt]
	\infty, & \dfrac{N}{sp} < q \le \infty,
	\end{cases}
	\]
	and $C = C(N,\Omega,p,s,q) > 0$.
\end{lemma}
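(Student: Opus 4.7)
The plan is to follow a Stampacchia/De Giorgi truncation argument, adapted to the fractional $p$-Laplacian. The only genuinely nonlocal ingredient is the pointwise inequality
\[
|u(x)-u(y)|^{p-2}(u(x)-u(y))\bigl(\psi(u(x))-\psi(u(y))\bigr)\;\geq\;\bigl|\Psi(u(x))-\Psi(u(y))\bigr|^{p},
\]
valid for monotone $\psi$ with $\Psi'=(\psi')^{1/p}$. I will apply this to $\psi(t)=G_k(t):=\mathrm{sgn}(t)(|t|-k)^{+}$ (so $\Psi=G_k$ itself), and to power-type truncations $\psi(t)=\mathrm{sgn}(t)(|t|^{\alpha}-k^{\alpha})^{+}$ for the subcritical case.

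\textbf{Step 1: Energy inequality.} For $k>0$, insert $\varphi=G_k(u)$ into the weak formulation (it lies in $W^{s,p}_0(\Omega)$ by standard chain-rule arguments). Using the pointwise inequality above yields
\[
[G_k(u)]_{D^{s,p}(\mathbb{R}^N)}^{p}\;\leq\;\int_{\Omega}f\,G_k(u)\,dx.
\]
Denote $A_k:=\{|u|>k\}$ and apply the fractional Sobolev inequality on the left and Hölder on the right:
\[
\|G_k(u)\|_{L^{p_s^{*}}(\Omega)}^{p}\;\leq\;C\,\|f\|_{L^{q}(\Omega)}\,\|G_k(u)\|_{L^{p_s^{*}}(\Omega)}\,|A_k|^{1-\frac{1}{q}-\frac{1}{p_s^{*}}}.
\]
This simplifies to $\|G_k(u)\|_{L^{p_s^{*}}}^{\,p-1}\leq C\|f\|_{q}\,|A_k|^{\theta}$ with $\theta:=1-\tfrac{1}{q}-\tfrac{1}{p_s^{*}}$.

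\textbf{Step 2: The $L^{\infty}$ case $q>N/(sp)$.} For $h>k>0$, $(h-k)|A_h|^{1/p_s^{*}}\leq \|G_k(u)\|_{L^{p_s^{*}}}$, so
\[
|A_h|\;\leq\;\frac{C\,\|f\|_{q}^{p_s^{*}/(p-1)}}{(h-k)^{p_s^{*}}}\,|A_k|^{\theta p_s^{*}/(p-1)}.
\]
Since $q>N/(sp)$ forces $\theta p_s^{*}/(p-1)>1$, Stampacchia's lemma yields $|A_h|=0$ for $h\geq C\|f\|_{q}^{1/(p-1)}|A_0|^{\beta}$, giving $\|u\|_{L^{\infty}}\leq C\|f\|_{q}^{1/(p-1)}$ with the claimed constant.

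\textbf{Step 3: The subcritical case $1<q<N/(sp)$.} Here I would test instead with $\varphi=\mathrm{sgn}(u)(|u|^{\alpha}-k^{\alpha})^{+}$ for a suitable $\alpha>1$ to be optimized, or equivalently iterate on the distribution function. The pointwise inequality above (with $\Psi$ chosen appropriately) together with Sobolev gives, for the level sets,
\[
\bigl\||u|^{(\alpha+p-1)/p}\bigr\|_{L^{p_s^{*}}(A_k)}^{p}\;\leq\;C\int_{A_k}|f|\,|u|^{\alpha}\,dx,
\]
and an interpolation-Hölder argument, optimizing $\alpha$ so that the two sides have the same scaling, produces precisely $r=\tfrac{N(p-1)q}{N-spq}$. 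Alternatively, one can run the truncation-iteration of Step 2 directly on $\phi(k):=\int(|u|-k)^{+}\,dx$, using the Cavalieri representation $\int|u|^{r}=r\int_{0}^{\infty}k^{r-1}|A_k|\,dk$ and the recursion derived in Step 1.

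\textbf{Main obstacle.} The principal technical point is that, unlike in the linear case $p=2$, one cannot rely on duality; everything must be extracted from the nonlinear monotonicity inequality. Consequently, choosing the correct power-type test function in Step 3 and verifying that it still belongs to $W^{s,p}_{0}(\Omega)$ (which requires a careful chain-rule estimate of Gagliardo seminorms of compositions $\Psi\circ u$) is the delicate part. Once this is set up, the iteration is mechanical and reproduces the Moser-Stampacchia thresholds $r$ and $\infty$ exactly as in the classical local case.
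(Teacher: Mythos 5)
The paper does not give a proof of this lemma; it cites it directly from \cite[Lemma~2.3]{MRmpsy}, so there is no in-paper argument to compare against. Your De Giorgi--Stampacchia truncation sketch is the standard route to such an estimate and is essentially correct; the pointwise inequality $|a-b|^{p-2}(a-b)(\psi(a)-\psi(b))\ge|\Psi(a)-\Psi(b)|^p$ with $\Psi'=(\psi')^{1/p}$ (H\"older applied to $\Psi(a)-\Psi(b)=\int_b^a(\psi')^{1/p}$) is indeed the key nonlocal replacement for the chain rule.

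Two points deserve care. First, the three-factor H\"older split in Step~1 requires $1/q+1/p_s^*\le 1$, i.e.\ $q\ge (p_s^*)'=\frac{Np}{N(p-1)+sp}$. This is automatic when $q>N/(sp)$ (since $N>sp$ implies $(p_s^*)'<N/(sp)$), but fails for $q$ near $1$; this is consistent with your switching to a two-factor H\"older $\int|f||u|^\alpha\le\|f\|_q\|u\|_{\alpha q'}^\alpha$ in Step~3, but it should be stated that Step~1's bound is only used in the $L^\infty$ regime. Second, solving $\alpha q'=\frac{\alpha+p-1}{p}\,p_s^*$ gives $\alpha=\frac{(p-1)N(q-1)}{N-spq}$, and hence $r=\alpha q'=\frac{N(p-1)q}{N-spq}$ exactly as required; but $\alpha>1$ holds only for $q>(p_s^*)'$, and for $1<q<(p_s^*)'$ one has $0<\alpha<1$. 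The argument still works because the truncation $\psi(t)=\mathrm{sgn}(t)(|t|^\alpha-k^\alpha)^+$ at level $k>0$ is Lipschitz (with $|\psi'|\le\alpha k^{\alpha-1}$ on $\{|t|>k\}$) and hence an admissible test function regardless of whether $\alpha>1$; one then lets $k\to0$ by Fatou. So you should not assume $\alpha>1$ in Step~3. With these amendments, the bootstrap $\|u\|_r^{p-1}\le C\|f\|_q$ and the Stampacchia iteration close as you describe and reproduce the stated thresholds $r$ and $\infty$.
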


\noindent
We consider, for a given $f\in L^1(\Omega)$, with $f\geq 0$ the truncation
$$
f_n(x):=\min\{f(x),n\},\quad\,\, x\in\Omega.
$$
Then, we consider the approximating problems
\begin{equation}
\tag{${\mathcal P}_n$}
\label{prob-approx}
\begin{cases}
(- \Delta)_p^s\, u_n =\frac{f_n(x)}{(u_n+1/n)^\gamma} & \text{in $\Omega$},  \\
u_n>0 & \text{in $\Omega$},  \\
u_n  = 0  & \text{in $\mathbb{R}^N \setminus \Omega$}.
\end{cases}
\end{equation}

\begin{proposition}\label{pro:solapprox}
For any $n\geq 1$ there exists a weak solution $u_n\in W^{s,p}_0(\Omega)\cap L^\infty(\Omega)$ to \eqref{prob-approx}.
\end{proposition}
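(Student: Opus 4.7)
The plan is to apply Schauder's fixed point theorem to a suitable nonlinear map on $L^p(\Omega)$. For $v\in L^p(\Omega)$, set
$$
g_v(x) := \frac{f_n(x)}{(v^+(x)+1/n)^\gamma},
$$
so that $0\le g_v\le n^{\gamma+1}$ a.e.\ in $\Omega$. Assuming $f_n\not\equiv 0$ (otherwise $f\equiv 0$ and the problem is trivial), Lemma~\ref{well-p} provides a unique $u=T(v)\in W^{s,p}_0(\Omega)$ with $(-\Delta)_p^s u=g_v$. Testing with $u$ itself and applying the fractional Sobolev embedding gives a bound $[u]_{D^{s,p}(\mathbb{R}^N)}\le C(n)$ independent of $v$, while Lemma~\ref{summab} with $q=\infty$ yields $\|u\|_{L^\infty(\Omega)}\le C(n)$. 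Hence $T$ sends all of $L^p(\Omega)$ into a fixed bounded subset of $W^{s,p}_0(\Omega)\cap L^\infty(\Omega)$, which is relatively compact in $L^p(\Omega)$ by the compact embedding.

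The main step is to verify that $T:L^p(\Omega)\to L^p(\Omega)$ is continuous. Given $v_k\to v$ in $L^p(\Omega)$, up to a subsequence $v_k\to v$ a.e.; combined with the uniform $L^\infty$ bound on $g_{v_k}$ and dominated convergence, this yields $g_{v_k}\to g_v$ in $L^q(\Omega)$ for every finite $q$. Setting $u_k:=T(v_k)$, the a priori bounds give, up to further extraction, $u_k\rightharpoonup \tilde u$ in $W^{s,p}_0(\Omega)$ and $u_k\to \tilde u$ in $L^p(\Omega)$. Since $u_k$ is the unique minimizer of the strictly convex functional
$$
\mathcal J_k(w)=\frac{1}{p}\int_{\mathbb{R}^{2N}}\frac{|w(x)-w(y)|^p}{|x-y|^{N+sp}}\,dx\,dy-\int_\Omega g_{v_k}\,w\,dx,
$$
the inequality $\mathcal J_k(u_k)\le \mathcal J_k(\tilde u)$ holds, and passing to the limit (using weak lower semicontinuity of the Gagliardo seminorm on the left and $L^{p'}$–$L^p$ strong convergence in the linear term on the right) forces $[u_k]_{D^{s,p}(\mathbb{R}^N)}\to [\tilde u]_{D^{s,p}(\mathbb{R}^N)}$, upgrading weak to strong convergence in $W^{s,p}_0(\Omega)$. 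This allows passage to the limit in the weak formulation of $(-\Delta)_p^s u_k=g_{v_k}$, so $\tilde u$ satisfies $(-\Delta)_p^s\tilde u=g_v$, i.e.\ $\tilde u = T(v)$ by the uniqueness of Lemma~\ref{well-p}; uniqueness of the limit forces the whole original sequence to converge.

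Schauder's theorem applied to the closed convex set $\{v\in L^p(\Omega):v\ge 0,\ \|v\|_{L^p(\Omega)}\le R\}$ for $R$ large enough (note that $T$ always returns a nonnegative function, as the minimizer of a functional with nonnegative datum) then produces a fixed point $u_n=T(u_n)\in W^{s,p}_0(\Omega)\cap L^\infty(\Omega)$. Since $u_n\ge 0$ we have $u_n^+=u_n$, so $u_n$ is a weak solution of \eqref{prob-approx}, and the strict positivity $u_n>0$ in $\Omega$ follows from the strong maximum principle already invoked in the proof of Lemma~\ref{well-p}. The principal obstacle is precisely the continuity of $T$: unlike the linear case, one cannot pass to the limit in the nonlinear fractional $p$-Laplacian from weak convergence alone, and the variational characterization of $u_k$ as a minimizer is what recovers the strong convergence needed to close the argument.
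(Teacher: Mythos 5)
Your proof is correct. Both you and the paper apply Schauder to the map $v\mapsto T(v)$ defined through Lemma~\ref{well-p}, but the two arguments diverge in two substantive ways. First, you set up Schauder in $L^p(\Omega)$, using the uniform bound on $T$ together with the compact embedding $W^{s,p}_0(\Omega)\hookrightarrow L^p(\Omega)$ to get relative compactness of the image at once; the paper instead works entirely in $W^{s,p}_0(\Omega)$, proving continuity and compactness of $S$ separately in that space. Second, and more importantly, the paper establishes the crucial strong $W^{s,p}_0$ convergence of $S(u_k)$ by invoking the algebraic monotonicity inequalities for the fractional $p$-Laplacian (splitting cases $1<p<2$ and $p\geq 2$, cf.\ \cite{peter}), while you recover the same strong convergence from the variational characterization of $T(v_k)$ as the minimizer of the strictly convex functional $\mathcal J_k$: weak lower semicontinuity of the Gagliardo seminorm, the inequality $\mathcal J_k(u_k)\le\mathcal J_k(\tilde u)$, and strong $L^p$ convergence of the linear term give norm convergence, and uniform convexity of $W^{s,p}_0(\Omega)$ then upgrades weak to strong. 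Your route avoids the case-by-case elementary inequalities and is arguably cleaner; the paper's inequality-based argument has the advantage of being applicable to settings where a variational characterization of $T(v)$ is not available. In both cases one still has to pass to the limit in the weak formulation to identify the limit as $T(v)$, and both proofs handle this correctly (the paper via boundedness in $L^{p'}$ plus a.e.\ convergence and weak $L^{p'}$ convergence of the difference quotients; you via the strong convergence just obtained). One remark: you state that one ``cannot pass to the limit in the nonlinear fractional $p$-Laplacian from weak convergence alone,'' but the paper's compactness step shows that weak $W^{s,p}_0$ convergence together with a.e.\ pointwise convergence suffices to pass to the limit in the weak form; your energy argument is not strictly required for that identification, though it is a perfectly valid and elegant alternative.
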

\begin{proof}
Given $n\in {\mathbb N}$ and a function $u\in L^p(\Omega)$, in light of Lemma~\ref{well-p} there exists a unique
solution  $w\in W^{s,p}_0(\Omega)$ to the problem
\begin{equation}\label{prob-fbounded-choi}
\begin{cases}
(- \Delta)_p^s\, w =\frac{f_n(x)}{(u^++1/n)^\gamma}   & \text{in $\Omega$},  \\
w>0 & \text{in $\Omega$},  \\
w  = 0  & \text{in $\mathbb{R}^N \setminus \Omega$,}
\end{cases}
\end{equation}
where $u^+:=\max\{u,0\}.$ Therefore we may define the map $L^p(\Omega)\ni u\mapsto w:=S(u) \in W^{s,p}_0(\Omega)\subset L^p(\Omega)$, where
$w$ is the unique solution to \eqref{prob-fbounded-choi}. Using $w$ as test function in \eqref{prob-fbounded-choi} we obtain
\begin{equation*}
\int_{\mathbb{R}^{2N}} \frac{|w(x) - w(y)|^p}{|x - y|^{N+s\,p}}\, dx\, dy
=\int_{\Omega} \frac{f_n(x)}{(u^++1/n)^\gamma} \,w\, dx\leq n^{\gamma+1}\|w\|_{L^1(\Omega)},
\end{equation*}
and thus by, Sobolev imbedding, we have that
\begin{equation}
\label{estim}
\left(\int_{\mathbb{R}^{2N}} \frac{|w(x) - w(y)|^p}{|x - y|^{N+s\,p}}\, dx\, dy\right)^{\frac{1}{p}}
\leq Cn^{\frac{\gamma+1}{p-1}},
\end{equation}
for some $C=C(p,s,N,\Omega)$ (independent of $u$), so that the ball of radius $R:=Cn^{\frac{\gamma+1}{p-1}}$ in $W^{s,p}_0(\Omega)$, is invariant under the action of  $S$. Now, in order to apply the Schauder's fixed point theorem to $S$ and then to obtain a solution to \eqref{prob-fbounded-choi}, we have to prove the continuity and the compactness of $S$ as  an operator from $W^{s,p}_0(\Omega)$ to $W^{s,p}_0(\Omega)$.
\vskip3pt
\noindent
$\bullet$ ({\em Continuity of $S$}). Denoting $w_k:=S(u_k)$ and $w:=S(u)$, then
\begin{equation*}
\lim_{k\to\infty}\|w_k-w\|_{W^{s,p}_0(\Omega)}=0,\,\,\,\quad\text{if}\,\,\, \lim_{k\to\infty}\|u_k-u\|_{W^{s,p}_0(\Omega)}=0.
\end{equation*}
By the strong convergence of $\{u_k\}_{k\in{\mathbb N}}$ in ${W^{s,p}_0(\Omega)}$, up to a subsequence,
we have $u_k \to u$ in $L^{p_s^*}(\Omega)$ and $u_k\to u$ a.e.\ in $\Omega$ as $k\to\infty$.\
Considering the corresponding sequence of solutions $\{w_k\}_{k\in \mathbb{N}}$, arguing as in the proof of Lemma \ref{well-p}, setting $\bar w_k(x):= w_k(x)-w(x) $  we obtain
\begin{eqnarray}\label{eq:pmi2}
&&\int_{\mathbb{R}^{2N}}\frac{(|w_k(x) - w_k(y)|+|w(x) - w(y)|)^{p-2}|\bar w_k(x)-\bar w_k(y)|^{2}}{|x - y|^{N+s\,p}}\, dx dy\\\nonumber
&&\leq \int_{\Omega}\left( \frac{f_n(x)}{(u_k^++1/n)^\gamma}-\frac{f_n(x)}{(u^++1/n)^\gamma} \right)\,(w_k-w)\, dx,\quad\,\,\, \text{if $1<p<2$},
\end{eqnarray}
as well as
\begin{eqnarray}\label{eq:pma2}
&&\int_{\mathbb{R}^{2N}}\frac{|\bar w_k(x)-\bar w_k(y)|^{p}}{|x - y|^{N+s\,p}}\, dx dy
\\\nonumber &&\leq \int_{\Omega}\left( \frac{f_n(x)}{(u_k^++1/n)^\gamma}-\frac{f_n(x)}{(u^++1/n)^\gamma} \right)\,(w_k-w)\, dx,\quad\,\,\, \text{if $p\geq  2$}.
\end{eqnarray}
Let us consider the right-hand side of \eqref{eq:pma2}.  Using H\"older and Sobolev inequalities we infer that
\begin{eqnarray}\nonumber
&&\left|\int_{\Omega}\left(\frac{f_n(x)}{(u_k^++1/n)^\gamma}-\frac{f_n(x)}{(u^++1/n)^\gamma} \right)\,(w_k-w)\, dx\right|\\\nonumber&&\leq\left(\int_{\Omega}\left|\frac{f_n(x)}{(u_k^++1/n)^\gamma}-\frac{f_n(x)}{(u^++1/n)^\gamma} \right| ^{(p^*_s)'}\,dx\right)^\frac{1}{(p^*_s)'}\|\bar w_k\|_{L^{p_s^*}(\Omega)}\\\nonumber
&&\leq C\left(\int_{\Omega}\left|\frac{f_n(x)}{(u_k^++1/n)^\gamma}-\frac{f_n(x)}{(u^++1/n)^\gamma} \right| ^{(p^*_s)'}\, dx\right)^\frac{1}{(p^*_s)'}\|\bar w_k\|_{W^{s,p}_0(\Omega)},
\end{eqnarray}
where $(p^*_s)'=Np/(N(p-1)+sp)$ and $C=C(p,s,N)$ is a positive constant. From \eqref{eq:pma2} we get
\begin{eqnarray}\label{eq:pma22}
&&\|w_k-w\|_{W^{s,p}_0(\Omega)}
\\\nonumber &&\leq  C\left(\int_{\Omega}\left|\frac{f_n(x)}{(u_k^++1/n)^\gamma}-\frac{f_n(x)}{(u^++1/n)^\gamma} \right|^{(p^*_s)'}\, dx\right)^\frac{1}{(p-1)\cdot(p^*_s)'}\quad\,\,\, \text{if $p\geq  2$}.
\end{eqnarray}
Observing that
\begin{equation}\label{eq:nnnn}
\left|\frac{f_n(x)}{(u_k^++1/n)^\gamma}-\frac{f_n(x)}{(u^++1/n)^\gamma}\right|\leq 2 n^{\gamma +1},
\end{equation}
by the dominated convergence theorem and by the fact that $u_k(x)\to u(x)$ a.e., from  \eqref{eq:pma22} we conclude that
$$
\lim_{k\rightarrow +\infty}\|w_k-w\|_{W^{s,p}_0(\Omega)}=0,
$$
showing, in the case $p\geq 2$, that the operator $S$ is continuous from  ${W^{s,p}_0(\Omega)}$ to ${W^{s,p}_0(\Omega)}$. From~\eqref{eq:pmi2}, a similar argument, shows the continuity of $S$ from  ${W^{s,p}_0(\Omega)}$ to ${W^{s,p}_0(\Omega)}$ for $1<p<2$.
\vskip4pt
\noindent
$\bullet$ ({\em Compactness of $S$}). \ Let $\{u_k\}_{k\in \mathbb{N}}\subset W^{s,p}_0(\Omega)$. Denoting $w_k:=S(u_k)$, we  show that, up to a subsequence and for some $w\in {W^{s,p}_0(\Omega)}$, it holds
\begin{equation*}
\lim_{k\rightarrow \infty}\|w_k-w\|_{W^{s,p}_0(\Omega)}=0.
\end{equation*}
Let $\{u_k\}_{k\in \mathbb{N}}\subset W^{s,p}_0(\Omega)$ with $\|u_k\|_{{W^{s,p}_0(\Omega)}}\leq C$ for all $k\geq 1$.\
Then, up to a subsequence, we have $u_k \rightharpoonup u$ in $W^{s,p}_0(\Omega),$ as well as
$u_k \to u$ in $L^r(\Omega)$, for $1\le r< p_s^\ast$.
In view of \eqref{estim}, we have $\|S(u_k)\|_{{W^{s,p}_0(\Omega)}}\leq C$ for some constant $C$
independent of $k$, and therefore
\begin{equation*}
S(u_k) \rightharpoonup  w, \,\,\, \text{in $W^{s,p}_0(\Omega)$},\qquad
S(u_k) \to  w, \,\,\, \text{in $L^r(\Omega)$, \,\,for $1\le r< p_s^\ast$},
\end{equation*}
for some $w\in W^{s,p}_0(\Omega)$.  Then for all $\varphi \in W^{s,p}_0(\Omega)$
\begin{equation}
\label{approx}
\int_{\mathbb{R}^{2N}} \frac{|w_k(x) - w_k(y)|^{p-2}\, (w_k(x) - w_k(y))\, (\varphi(x) - \varphi(y))}{|x - y|^{N+s\,p}}\, dx\, dy
=\int_{\Omega} \frac{f_n(x)}{(u^+_k+1/n)^\gamma} \,\varphi\, dx.
\end{equation}
We show now that, letting $k$ to infinity,   \eqref{approx} converges to
\begin{equation}
\label{limite}
\int_{\mathbb{R}^{2N}} \frac{|w(x) - w(y)|^{p-2}\, (w(x) - w(y))\, (\varphi(x) - \varphi(y))}{|x - y|^{N+s\,p}}\, dx\, dy
=\int_{\Omega} \frac{f_n(x)}{(u^++1/n)^\gamma} \,\varphi\, dx.
\end{equation}
By the dominated convergence theorem, it is readily seen that
$$
\lim_{k\to\infty}\int_{\Omega} \frac{f_n(x)}{(u^+_k+1/n)^\gamma} \,\varphi\, dx=\int_{\Omega} \frac{f_n(x)}{(u^++1/n)^\gamma} \,\varphi\, dx.
$$
Furthermore, since the sequence
$$
\left\{\frac{|w_k(x) - w_k(y)|^{p-2}\, (w_k(x) - w_k(y))}{|x - y|^{\frac{N+s\,p}{p'}}}\right\}_{k\in {\mathbb N}}\quad \text{is bounded in $L^{p'}({\mathbb R}^{2N})$},
$$
and by the pointwise convergence of $w_k(x)$ to $w(x)$
$$
\frac{|w_k(x) - w_k(y)|^{p-2}\, (w_k(x) - w_k(y))}{|x - y|^{\frac{N+s\,p}{p'}}}\to \frac{|w(x) - w(y)|^{p-2}\, (w(x) - w(y))}{|x - y|^{\frac{N+s\,p}{p'}}}
\quad \text{a.e.\ in ${\mathbb R}^{2N}$},
$$
it follows by standard results that, up to a subsequence,
$$
\frac{|w_k(x) - w_k(y)|^{p-2}\, (w_k(x) - w_k(y))}{|x - y|^{\frac{N+s\,p}{p'}}}
\rightharpoonup \frac{|w(x) - w(y)|^{p-2}\, (w(x) - w(y))}{|x - y|^{\frac{N+s\,p}{p'}}}
\quad \text{weakly in $L^{p'}({\mathbb R}^{2N})$}.
$$
Then, since
$$
\frac{\varphi(x) - \varphi(y)}{|x - y|^{\frac{N+s\,p}{p}}}\in L^p({\mathbb R}^{2N}),
$$
we conclude that the l.h.s.\ of \eqref{approx} converges to the l.h.s.\ of \eqref{limite}. Whence, \eqref{limite} holds, that is, in particular, $w=S(u)$.
Arguing as for \eqref{eq:pmi2} and~\eqref{eq:pma2} setting $w_k(x)=S(u_k)$ and $\bar w_k(x):= w_k(x)-w(x)$, we infer that
\begin{eqnarray}\nonumber
&&\int_{\mathbb{R}^{2N}}\frac{(|w_k(x) - w_k(y)|+|w(x) - w(y)|)^{p-2}|\bar w_k(x)-\bar w_k(y)|^{2}}{|x - y|^{N+s\,p}}\, dx dy\\\nonumber
&&\leq \|w_k-w\|_{L^p(\Omega)} \left(\int_{\Omega}\left|\frac{f_n(x)}{(u_k^++1/n)^\gamma}-\frac{f_n(x)}{(u^++1/n)^\gamma} \right|^{p'}\, dx\right)^{\frac{1}{p'}},\quad\,\,\, \text{if $1<p<2$},
\end{eqnarray}
as well as
\begin{eqnarray}\nonumber
&&\int_{\mathbb{R}^{2N}}\frac{|\bar w_k(x)-\bar w_k(y)|^{p}}{|x - y|^{N+s\,p}}\, dx dy
\\\nonumber &&\leq \|w_k-w\|_{L^p(\Omega)} \left(\int_{\Omega}\left|\frac{f_n(x)}{(u_k^++1/n)^\gamma}-\frac{f_n(x)}{(u^++1/n)^\gamma} \right|^{p'}\, dx\right)^{\frac{1}{p'}},\quad\,\,\, \text{if $p\geq  2$},
\end{eqnarray}
where $p'=p/(p-1)$. Using \eqref{eq:nnnn}, the last two  equations imply that
\begin{equation*}
\lim_{k\rightarrow +\infty}\|S(u_k)-S(u)\|_{W^{s,p}_0(\Omega)}=0,
\end{equation*}
that is the compactness of $S$ from ${W^{s,p}_0(\Omega)}$ to ${W^{s,p}_0(\Omega)}$.
Schauder's fixed point theorem provides that existence of $u_n\in W^{s,p}_0(\Omega)$ such that $u_n=S(u_n)$, that is a weak solution to
\begin{equation}\label{pro:u_n}
\begin{cases}
(- \Delta)_p^s\, u_n =\frac{f_n(x)}{(u_n+1/n)^\gamma}   & \text{in $\Omega$},  \\
u_n>0 & \text{in $\Omega$},  \\
u_n  = 0  & \text{in $\mathbb{R}^N \setminus \Omega$.}
\end{cases}
\end{equation}
Since the r.h.s.\  of \eqref{pro:u_n} belongs to $L^{\infty}(\Omega)$,
by virtue of Lemma~\ref{summab} we have $u_n\in L^{\infty}(\Omega)$.
\end{proof}

\begin{lemma}[Monotonicity]\label{lem:monotonicity}
The sequence $\{u_n\}_{n\in {\mathbb N}}$ found in the previous lemma satisfies
$$
u_n(x)\leq u_{n+1}(x),\quad \text{for a.e.\ $x\in\Omega$},
$$
and
$$
\text{$u_n(x)\geq \sigma>0$,\quad
	for a.e. $x\in\omega\Subset\Omega$,}
$$	
for some positive constant $\sigma=\sigma(\omega)$.
\end{lemma}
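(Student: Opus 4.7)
The plan is to prove the two assertions in sequence, deriving the lower bound on compacts from the monotonicity.

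For the monotonicity, I will apply a weak comparison argument: test the difference of the equations $(\mathcal{P}_n)$ and $(\mathcal{P}_{n+1})$ with $w := (u_n - u_{n+1})^+$. Since $u_n, u_{n+1} \in W^{s,p}_0(\Omega) \cap L^\infty(\Omega)$ by Proposition~\ref{pro:solapprox}, $w$ lies in $W^{s,p}_0(\Omega) \cap L^\infty(\Omega)$; moreover, because the right-hand sides are bounded, the weak formulation extends by density from $C^\infty_c(\Omega)$ to $W^{s,p}_0(\Omega)$, so $w$ is admissible. On $\{u_n > u_{n+1}\}$ one has $u_n + 1/n > u_{n+1} + 1/(n+1)$ (because $1/n > 1/(n+1)$) together with $f_n \le f_{n+1}$, so
\[
\frac{f_n}{(u_n+1/n)^\gamma} - \frac{f_{n+1}}{(u_{n+1}+1/(n+1))^\gamma} \;\le\; 0 \qquad \text{on } \{w > 0\},
\]
and the right-hand side of the subtracted equation is $\le 0$. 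The elementary algebraic inequality from \cite[Section~10]{peter} already used in the uniqueness part of Lemma~\ref{well-p}, applied to the integrand tested against $w$, bounds the left-hand side from below by a positive multiple of $\|w\|^p_{W^{s,p}_0(\Omega)}$ for $p \ge 2$ (with the degenerate quadratic analogue for $1 < p < 2$). Just as in Lemma~\ref{well-p}, this forces $w$ to be constant on $\mathbb{R}^N$; since $w = 0$ outside $\Omega$, I conclude $w \equiv 0$, i.e.\ $u_n \le u_{n+1}$ a.e.

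For the positivity on compacts, the monotonicity reduces matters to proving $\mathop{\mathrm{ess\,inf}}_\omega u_1 > 0$ on every compact $\omega \Subset \Omega$. Since the problem is of interest only when $f \not\equiv 0$, we have $f_1 \not\equiv 0$, and $u_1 \in W^{s,p}_0(\Omega) \cap L^\infty(\Omega)$ solves $(-\Delta)_p^s u_1 = f_1(x)/(u_1+1)^\gamma$ with a bounded, nonnegative, nontrivial right-hand side. The strong maximum principle for the fractional $p$-Laplacian \cite[Theorem~A.1]{BF} yields $u_1 > 0$ a.e.\ in $\Omega$; interior H\"older regularity for bounded weak solutions of the fractional $p$-Laplace equation then gives $u_1 \in C^{0,\alpha}_{\mathrm{loc}}(\Omega)$. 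Continuity together with a.e.\ positivity implies $u_1 > 0$ at every point of $\Omega$, so $\sigma := \min_\omega u_1 > 0$; combined with $u_n \ge u_1$ from the first step, this gives $u_n \ge \sigma$ on $\omega$.

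The main technical point is really in the first step: because of the nonlocal character of $(-\Delta)_p^s$, pairs $(x,y)$ straddling $\{w > 0\}$ and its complement contribute to the double integral tested by $w$, and one has to verify that the integrand is pointwise nonnegative and controls $|w(x) - w(y)|^p$. This is precisely the pointwise estimate already invoked in the uniqueness argument of Lemma~\ref{well-p}, so no new ingredient is needed.
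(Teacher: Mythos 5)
Your overall structure matches the paper's: test the difference of the two equations with $w=(u_n-u_{n+1})^+$, observe that the right-hand side is nonpositive (using $1/n>1/(n+1)$ and $f_n\le f_{n+1}$), conclude $w\equiv 0$; then use $L^\infty$ bounds and H\"older regularity plus the strong maximum principle to bound $u_1$ away from zero on compacts and pass to $u_n$ by monotonicity. The positivity step is essentially the same as the paper's (which cites \cite[Theorem~1.1]{ibero} for global H\"older regularity).

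However, there is a genuine gap in the monotonicity step, precisely at the point you flag as ``no new ingredient is needed.'' The elementary inequalities from \cite[Section~10]{peter} used in Lemma~\ref{well-p} are of the form
\[
\big(I_p(a)-I_p(b)\big)(a-b)\ \ge\ c\,|a-b|^p\quad (p\ge 2),
\]
applied with $a=u_1(x)-u_1(y)$, $b=u_2(x)-u_2(y)$ and test function $\varphi=u_1-u_2$, so that $a-b$ coincides with $\varphi(x)-\varphi(y)$. When you test with $w=(u_n-u_{n+1})^+$ this identification fails: $w(x)-w(y)$ is not equal to $(u_n-u_{n+1})(x)-(u_n-u_{n+1})(y)$ on pairs $(x,y)$ straddling $\{w>0\}$ and its complement, so the quoted inequality does not apply pointwise, and the claimed lower bound by $c\,\|w\|_{W^{s,p}_0(\Omega)}^p$ is not justified. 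The paper does something different here: it invokes \cite[Lemma~9]{lindglindq}, which gives the weaker but sufficient statement that
\[
\big(I_p(u_n(x)-u_n(y))-I_p(u_{n+1}(x)-u_{n+1}(y))\big)\big(w(x)-w(y)\big)\ \ge\ 0 \quad\text{a.e.},
\]
with equality forcing $(u_n(x)-u_{n+1}(x))^+=(u_n(y)-u_{n+1}(y))^+$. Combined with the nonpositivity of the tested bilinear form, this yields that the integrand vanishes a.e.\ and hence $w$ is constant, so $w\equiv 0$. You either need to cite this specific lemma, or carry out a decomposition of $\mathbb{R}^{2N}$ according to the signs of $w(x)$, $w(y)$ and verify the needed pointwise inequality case by case; simply transplanting the \cite{peter} estimate from Lemma~\ref{well-p} does not cover the mixed cases.
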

\begin{proof}
We have, for any $n\in {\mathbb N}$, that for all $\varphi\in W^{s,p}_0(\Omega)$
\begin{equation*}
\int_{\mathbb{R}^{2N}} \frac{|u_n(x) - u_n(y)|^{p-2}\, (u_n(x) - u_n(y))\, (\varphi(x) - \varphi(y))}{|x - y|^{N+s\,p}}\, dx\, dy
=\int_{\Omega} \frac{f_n(x)}{(u_n+1/n)^\gamma} \,\varphi\, dx.
\end{equation*}
as well as for all $\varphi\in  W^{s,p}_0(\Omega)$
\begin{equation*}
\int_{\mathbb{R}^{2N}} \frac{|u_{n+1}(x) - u_{n+1}(y)|^{p-2}\, (u_{n+1}(x) - u_{n+1}(y))\, (\varphi(x) - \varphi(y))}{|x - y|^{N+s\,p}}\, dx\, dy
=\int_{\Omega} \frac{f_{n+1}(x)}{(u_{n+1}+1/(n+1))^\gamma} \,\varphi\, dx.
\end{equation*}
By taking $\varphi=w=(u_n-u_{n+1})^+\in W^{s,p}_0(\Omega)$ as test function in the  formula above
and subtracting the second from the first, concerning the r.h.s. (and recalling that $f_n\leq f_{n+1}$ a.e.) we get
\begin{align*}
&\int_{\Omega} \frac{f_n(x)}{(u_n+1/n)^\gamma} \,(u_n-u_{n+1})^+\, dx-\int_{\Omega} \frac{f_{n+1}(x)}{(u_{n+1}+1/(n+1))^\gamma} (u_n-u_{n+1})^+\, dx \\
&\leq \int_{\Omega} f_{n+1}(u_n-u_{n+1})^+
\frac{(u_{n+1}+1/(n+1))^\gamma-(u_n+1/n)^\gamma}{(u_n+1/n)^\gamma (u_{n+1}+1/(n+1))^\gamma}dx\leq 0.
\end{align*}
Then, if $I_p(s):=|s|^{p-2}s$, we conclude that
\begin{equation}
\label{in-neg}
\int_{\mathbb{R}^{2N}} \frac{\big (I_p(u_n(x) - u_n(y))-I_p(u_{n+1}(x) - u_{n+1}(y))\big)\,
	(w(x) - w(y))}{|x - y|^{N+s\,p}}\leq 0.
\end{equation}	
Now, arguing exactly as in the proof of \cite[Lemma 9]{lindglindq}, we get
$$
\big (I_p(u_n(x) - u_n(y))-I_p(u_{n+1}(x) - u_{n+1}(y))\big)\, (w(x) - w(y))\geq 0,\quad\text{for a.e. $(x,y)\in {\mathbb R}^{2N}$},
$$
with the  {\em strict} inequality, unless it holds
\begin{equation}
\label{concl}
(u_n(x)-u_{n+1}(x))^+=(u_n(y)-u_{n+1}(y))^+,\quad\text{for a.e. $(x,y)\in {\mathbb R}^{2N}$}.
\end{equation}
On the other hand, by \eqref{in-neg}, we have
$$
\big (I_p(u_n(x) - u_n(y))-I_p(u_{n+1}(x) - u_{n+1}(y))\big)\, (w(x) - w(y))=0,\quad\text{for a.e. $(x,y)\in {\mathbb R}^{2N}$}.
$$
Therefore, \eqref{concl} holds true, namely
\begin{equation*}
(u_n(x)-u_{n+1}(x))^+=C,\quad\text{for a.e. $(x,y)\in {\mathbb R}^{2N}$},
\end{equation*}
for some constant $C$. Since $u_n=u_{n+1}=0$ on ${\mathbb R}^{N}\setminus\Omega$ it follows that $C=0$, which implies in turn that
$u_n(x)\leq u_{n+1}(x)$, for a.e.\ $x\in\Omega$. This concludes the proof of the first assertion. Concerning the second assertion, we observe that
we know that $u_1\in L^\infty(\Omega)$, yielding
$$
(- \Delta)_p^s\, u_1 =\frac{f_1(x)}{(u_1+1)^\gamma} \in L^\infty(\Omega) .
$$
Then, by \cite[Theorem 1.1]{ibero} we deduce that $u_1\in C^{0,\alpha}(\bar{\Omega})$ for some $\alpha\in (0,1)$. In particular by the strong maximum principle,
$$
\text{$u_1(x)\geq \sigma>0$,\quad
	for a.e. $x\in\omega\subset\subset\Omega$}
$$	
and $\sigma=\sigma(\omega)$. The second assertion then follows by monotonicity.
\end{proof}

\section{Existence of solutions}

To prove the existence of a solution to \eqref{prob} we use the sequence of solutions $\{u_n\}_{n\in \mathbb N}$ of problem~\eqref{prob-approx}  (see Proposition \ref{pro:solapprox}) and then, using some a-priori estimates,  we pass to the limit.

\subsection{Existence in the case $0<\gamma\leq 1$}

First of all, we prove  the following

\begin{lemma}\label{lem:apiori-1}
Let $\{u_n\}_{n\in {\mathbb N}}\subset W^{s,p}_0(\Omega)\cap L^\infty(\Omega)$ be the sequence of
solution to problem  \eqref{prob-approx} provided by Proposition \ref{pro:solapprox}. Assume that
\begin{equation}\label{eq:mumerolemma3.1}
0<\gamma \leq 1,\qquad f\geq 0,\quad f\in L^{m}(\Omega),\quad
m:=\frac{Np}{N(p-1)+sp+\gamma(N-sp)}.
\end{equation}
Then $\{u_n\}_{n\in {\mathbb N}}$ is bounded in $W^{s,p}_0(\Omega)$.
\end{lemma}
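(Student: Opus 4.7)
The natural test function for an a priori estimate is $\varphi = u_n$ itself, which lies in $W^{s,p}_0(\Omega)$ by Proposition~\ref{pro:solapprox}. Plugging $\varphi = u_n$ into the weak formulation of $(\mathcal P_n)$ yields
$$
\|u_n\|_{W^{s,p}_0(\Omega)}^p = \int_{\Omega} \frac{f_n(x)\, u_n}{(u_n + 1/n)^\gamma}\, dx.
$$
Since $0 < \gamma \le 1$ and $u_n \ge 0$, one has $(u_n + 1/n)^\gamma \ge u_n^\gamma$ pointwise, so the integrand is bounded by $f(x)\, u_n^{1-\gamma}$ (interpreting $u_n^0 \equiv 1$ when $\gamma = 1$). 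This reduces the problem to estimating $\int_\Omega f\, u_n^{1-\gamma}\, dx$.

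The plan is then a Hölder-plus-Sobolev argument, with the exponent $m$ chosen precisely so that the $u_n$ factor lands in $L^{p_s^*}(\Omega)$. I would apply Hölder's inequality with exponents $m$ and $m' = m/(m-1)$, giving
$$
\int_\Omega f\, u_n^{1-\gamma}\, dx \le \|f\|_{L^m(\Omega)}\, \|u_n\|_{L^{(1-\gamma)m'}(\Omega)}^{1-\gamma}.
$$
A direct computation from the definition of $m$ in \eqref{eq:mumerolemma3.1} shows $(1-\gamma)\, m' = p_s^* = Np/(N - sp)$; indeed, $1/m = 1 - (1-\gamma)(N-sp)/(Np)$. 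The fractional Sobolev embedding $W^{s,p}_0(\Omega) \hookrightarrow L^{p_s^*}(\Omega)$ then gives
$$
\|u_n\|_{W^{s,p}_0(\Omega)}^p \le C\, \|f\|_{L^m(\Omega)}\, \|u_n\|_{W^{s,p}_0(\Omega)}^{1-\gamma}.
$$

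Finally, since $p - (1-\gamma) = p + \gamma - 1 > 0$, I can absorb the right-hand norm into the left to conclude
$$
\|u_n\|_{W^{s,p}_0(\Omega)}^{p+\gamma-1} \le C\, \|f\|_{L^m(\Omega)},
$$
which is the desired uniform bound (and the case $\gamma = 1$ just reduces to $\|u_n\|_{W^{s,p}_0(\Omega)}^p \le \|f\|_{L^1(\Omega)}$, consistent with $m = 1$). There is no serious obstacle here: the whole argument hinges on the algebraic identification $(1-\gamma)m' = p_s^*$, and the only mild point to check is the pointwise bound $u_n/(u_n + 1/n)^\gamma \le u_n^{1-\gamma}$, valid because $\gamma \in (0,1]$ and $1/n > 0$.
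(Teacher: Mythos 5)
Your proof is correct and follows essentially the same route as the paper's: test the weak formulation with $\varphi = u_n$, bound $u_n/(u_n+1/n)^\gamma \le u_n^{1-\gamma}$, apply H\"older with exponents $(m,m')$ so that $(1-\gamma)m' = p_s^*$, invoke the Sobolev embedding, and absorb the subcritical power. The only cosmetic difference is that the paper states the $\gamma=1$ case separately as a direct bound $\int f_n\,u_n/(u_n+1/n)\le\int f$, whereas you fold it in as the degenerate endpoint $m=1$, $m'=\infty$; both are fine.
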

\begin{proof}
In the case $0<\gamma<1$,  taking $u_n$ as test function in \eqref{prob-approx}, as $f_n\leq f$ we get
\begin{align}\label{eq:stimaun}
\int_{\mathbb{R}^{2N}}\frac{|u_n(x)-u_n(y)|^p}{|x-y|^{N+ps}}\,dx\,dy &\leq \int_{\Omega} \frac{f_n(x)}{(u_n+1/n)^\gamma} \,u_n\, dx\leq \int_{\Omega} f_n(x) \,u_n^{1-\gamma}\, dx\\\nonumber
&\leq\|f\|_{L^m(\Omega)} \left(\int_{\Omega}u_n^{(1-\gamma)m'}\, dx\right)^{\frac{1}{m'}},
\end{align}
where $m'=m/(m-1)$. Since $(1-\gamma)m'=p^*_s$, by the Sobolev embedding, we obtain
\begin{equation*}
\left(\int_{\Omega}u_n^{(1-\gamma)m'}\, dx\right)^{\frac{1}{m'}}\leq C \left(\int_{\mathbb{R}^{2N}}\frac{|u_n(x)-u_n(y)|^p}{|x-y|^{N+ps}}\,dx\,dy\right)^{\frac{p^*_s}{pm'}},
\end{equation*}
for some constant $C=C(p,s,N)>0$.\ Finally, since $p^*_s/(pm')<1$, from \eqref{eq:stimaun} we get
$$
\sup_{n\in {\mathbb N}}\|u_n\|_{W^{s,p}_0(\Omega)}\leq C(f,p,s,\gamma,N).
$$
If instead $\gamma =1$,  then arguing as for \eqref{eq:stimaun}, we get
\begin{equation*}
\int_{\mathbb{R}^{2N}}\frac{|u_n(x)-u_n(y)|^p}{|x-y|^{N+ps}}\,dx\,dy\leq \int_{\Omega} \frac{f_n(x)}{u_n+1/n} \,u_n\, dx\leq \int_{\Omega} f(x)  dx,
\end{equation*}
which yields again the desired boundedness.
\end{proof}

\begin{theorem}[Existence, $0<\gamma\leq 1$]
	\label{thm:exgamma<=1}
Assume that \eqref{eq:mumerolemma3.1} holds.
Then  problem \eqref{prob} admits  a weak solution $u\in W^{s,p}_0(\Omega)$.
\end{theorem}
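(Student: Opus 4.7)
The plan is to pass to the limit in the approximating problems \eqref{prob-approx}, exploiting the uniform $W^{s,p}_0$-bound from Lemma \ref{lem:apiori-1} together with the monotonicity and the pointwise lower bound provided by Lemma \ref{lem:monotonicity}. First I would extract a subsequence, still denoted $\{u_n\}$, such that $u_n \rightharpoonup u$ weakly in $W^{s,p}_0(\Omega)$, $u_n \to u$ strongly in $L^r(\Omega)$ for every $r \in [1,p_s^*)$, and $u_n \to u$ a.e.\ in $\Omega$. The monotonicity in $n$ and Lemma \ref{lem:monotonicity} transfer the bound $u_n \ge \sigma(\omega) > 0$ a.e.\ on every $\omega \Subset \Omega$ to the limit, so ${\rm ess\,inf}_K\,u > 0$ on every compact $K \Subset \Omega$; in particular $u > 0$ a.e.\ in $\Omega$ and $u = 0$ outside $\Omega$. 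Because $0 < \gamma \le 1$ forces $(\gamma+p-1)/p \le 1$, the requirement $u^{\max\{(\gamma+p-1)/p,1\}} \in W^{s,p}_0(\Omega)$ reduces to $u \in W^{s,p}_0(\Omega)$, while $f/u^\gamma \in L^1_{\mathrm{loc}}(\Omega)$ follows from $f \in L^m(\Omega) \subset L^1(\Omega)$ together with the compact lower bound on $u$.

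Next I would pass to the limit in the weak formulation of \eqref{prob-approx} tested against an arbitrary $\varphi \in C_c^\infty(\Omega)$; set $\omega := \mathrm{supp}\,\varphi \Subset \Omega$. For the right-hand side, the integrand $f_n\varphi/(u_n + 1/n)^\gamma$ converges pointwise a.e.\ to $f\varphi/u^\gamma$ and is dominated by $f|\varphi|/\sigma(\omega)^\gamma \in L^1(\omega)$, so dominated convergence yields
\[
\int_\Omega \frac{f_n(x)\,\varphi}{(u_n + 1/n)^\gamma}\,dx \longrightarrow \int_\Omega \frac{f(x)\,\varphi}{u^\gamma}\,dx.
\]
For the left-hand side, set
\[
\Phi_n(x,y) := \frac{|u_n(x) - u_n(y)|^{p-2}(u_n(x) - u_n(y))}{|x - y|^{(N+sp)/p'}}.
\]
The uniform bound on $\{u_n\}$ in $W^{s,p}_0(\Omega)$ yields $\{\Phi_n\}$ bounded in $L^{p'}(\mathbb{R}^{2N})$, while the a.e.\ convergence of $u_n$ gives $\Phi_n \to \Phi$ a.e., where $\Phi$ denotes the analogous quantity formed from $u$. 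Repeating the argument already used in the compactness step of the proof of Proposition \ref{pro:solapprox}, boundedness in $L^{p'}(\mathbb{R}^{2N})$ together with a.e.\ convergence implies weak convergence $\Phi_n \rightharpoonup \Phi$ in $L^{p'}(\mathbb{R}^{2N})$; pairing with the fixed $(\varphi(x)-\varphi(y))/|x-y|^{(N+sp)/p} \in L^p(\mathbb{R}^{2N})$ by weak-strong duality produces convergence of the left-hand side.

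The main technical point is precisely this weak $L^{p'}$ convergence of $\Phi_n$: a priori it holds only along a further subsequence, but because the limit is uniquely determined by the a.e.\ limit of $u_n$, the whole sequence converges. Assembling the two limits gives the weak identity \eqref{v-f} for every $\varphi \in C_c^\infty(\Omega)$, and combined with the structural properties collected above, this shows that $u \in W^{s,p}_0(\Omega)$ is a weak solution to \eqref{prob}.
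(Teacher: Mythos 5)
Your argument is correct and follows essentially the same route as the paper: uniform $W^{s,p}_0$-bound from Lemma \ref{lem:apiori-1}, weak/strong/a.e.\ extraction, the compact lower bound from Lemma \ref{lem:monotonicity} to control the singular term by dominated convergence, and weak $L^{p'}$ convergence of the nonlinear difference quotient paired against the fixed $L^p$ test quantity. The extra remarks you add (that $\max\{(\gamma+p-1)/p,1\}=1$ here, that $f/u^\gamma\in L^1_{\rm loc}$, and the subsequence-uniqueness observation) are correct refinements, and your domination $f|\varphi|/\sigma(\omega)^\gamma$ fixes a small sign typo in the paper's exponent.
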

\begin{proof}
By virtue of Lemma \ref{lem:apiori-1}, the sequence  of solutions $\{u_n\}_{n\in \mathbb N}\subset W^{s,p}_0(\Omega)\cap L^\infty(\Omega)$ of problem~\eqref{prob-approx}
provided by Proposition \ref{pro:solapprox} is bounded in ${W^{s,p}_0(\Omega)}$.
Then, up to a subsequence, we have $u_n \rightharpoonup u$ in $W^{s,p}_0(\Omega),$
$u_n \to u$ in $L^r(\Omega)$ for $1\le r< p_s^\ast$ and $u_n \rightarrow u$ a.e.\ in $\Omega$
and, furthermore, by Lemma~\ref{lem:monotonicity}, we have
$$
\text{for all $K\Subset\Omega$ there exists $\sigma_K>0$ such that $u(x)\geq \sigma_K>0$,\quad
	for a.e. $x\in K$.}
$$	
We have
\begin{equation}\label{eq:equazPn}
\int_{\mathbb{R}^{2N}} \frac{|u_n(x) - u_n(y)|^{p-2}\, (u_n(x) - u_n(y))\, (\varphi(x) - \varphi(y))}{|x - y|^{N+s\,p}}\, dx\, dy
=\int_{\Omega} \frac{f_n(x)}{(u_n+1/n)^\gamma} \,\varphi\, dx,
\end{equation}
for all $\varphi \in C^\infty_c(\Omega)$.  Since the sequence
$$
\left\{\frac{|u_n(x) - u_n(y)|^{p-2}\, (u_n(x) - u_n(y))}{|x - y|^{\frac{N+s\,p}{p'}}}\right\}_{n\in {\mathbb N}}\quad \text{is bounded in $L^{p'}({\mathbb R}^{2N})$},
$$
and by the point-wise convergence of $u_n$ to $u$
$$
\frac{|u_n(x) - u_n(y)|^{p-2}\, (u_n(x) - u_n(y))}{|x - y|^{\frac{N+s\,p}{p'}}}\to \frac{|u(x) - u(y)|^{p-2}\, (u(x) - u(y))}{|x - y|^{\frac{N+s\,p}{p'}}}
\quad \text{a.e.\ in ${\mathbb R}^{2N}$},
$$
it follows by standard results that
$$
\frac{|w_k(x) - w_k(y)|^{p-2}\, (w_k(x) - w_k(y))}{|x - y|^{\frac{N+s\,p}{p'}}}
\rightharpoonup \frac{|w(x) - w(y)|^{p-2}\, (w(x) - w(y))}{|x - y|^{\frac{N+s\,p}{p'}}}
\quad \text{weakly in $L^{p'}({\mathbb R}^{2N})$}.
$$
Then, since for $\varphi \in C^\infty_c(\Omega)$ we have
$$
\frac{\varphi(x) - \varphi(y)}{|x - y|^{\frac{N+s\,p}{p}}}\in L^p({\mathbb R}^{2N}),
$$
we conclude that
\begin{eqnarray}\nonumber
&&\lim_{n \rightarrow +\infty}\int_{\mathbb{R}^{2N}} \frac{|u_n(x) - u_n(y)|^{p-2}\, (u_n(x) - u_n(y))\, (\varphi(x) - \varphi(y))}{|x - y|^{N+s\,p}}\, dx\, dy
=
\\\nonumber
&&\int_{\mathbb{R}^{2N}} \frac{|u(x) - u(y)|^{p-2}\, (u(x) - u(y))\, (\varphi(x) - \varphi(y))}{|x - y|^{N+s\,p}}\, dx\, dy,
\end{eqnarray}
for all $\varphi \in C^\infty_c(\Omega)$.
Concerning the right-hand side of formula \eqref{eq:equazPn}, recalling Lemma \ref{lem:monotonicity}, for any $\varphi \in C^\infty_c(\Omega)$
with ${\rm supp}(\varphi)=K$, there exists $\sigma_K>0$ independent of $n$ such that
\begin{equation*}
\left |\frac{f_n(x)\,\varphi}{(u_n+1/n)^\gamma} \right|\leq \sigma_K^\gamma |f(x)\varphi(x)| \in L^1(\Omega).
\end{equation*}
By the dominated convergence theorem we conclude that
$$
\lim_{n\to\infty}\int_{\Omega} \frac{f_n(x)\,\varphi}{(u_n+1/n)^\gamma} \,\varphi\, dx=\int_{\Omega} \frac{f(x)}{u^\gamma} \,\varphi\, dx.
$$
Finally, passing to the limit in \eqref{eq:equazPn}, we conclude that
\begin{equation*}
\int_{\mathbb{R}^{2N}} \frac{|u(x) - u(y)|^{p-2}\, (u(x) - u(y))\, (\varphi(x) - \varphi(y))}{|x - y|^{N+s\,p}}\, dx\, dy
=\int_{\Omega} \frac{f(x)}{u^\gamma}\,\varphi\, dx,
\end{equation*}
for all $\varphi \in C^\infty_c(\Omega)$, namely $u$ is a solution to \eqref{prob}.
\end{proof}

%
%

\subsection{Existence in the case $\gamma>1$}

\noindent
First, we recall the following result from \cite[Lemma 3.3]{BP}.

\begin{proposition}
	\label{convex}
Let $F\in L^q(\Omega)$ with $q>N/(sp)$ and let $u\in W^{s,p}_0(\Omega)\cap L^\infty(\Omega)$ be such that 	
\begin{equation*}
\int_{\mathbb{R}^{2N}} \frac{|u(x) - u(y)|^{p-2}\, (u(x) - u(y))\, (\varphi(x) - \varphi(y))}{|x - y|^{N+s\,p}}\, dx\, dy
=\int_{\Omega} F\varphi\, dx,
\end{equation*}
for all $\varphi \in W^{s,p}_0(\Omega)$. Then, for every convex $C^1$ function $\Phi:{\mathbb R}\to {\mathbb R}$, we have
\begin{equation*}
\int_{\mathbb{R}^{2N}} \frac{|\Phi(u)(x) - \Phi(u)(y)|^{p-2}\, (\Phi(u)(x) - \Phi(u)(y))\, (\varphi(x) - \varphi(y))}{|x - y|^{N+s\,p}}\, dx\, dy\leq
\int_{\Omega} F|\Phi'(u)|^{p-2}\Phi'(u)\varphi\, dx,
\end{equation*}
for every nonnegative function $\varphi \in W^{s,p}_0(\Omega)$.
\end{proposition}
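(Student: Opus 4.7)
The plan is to follow the approach of the cited reference \cite{BP}: to insert a carefully chosen test function into the weak formulation of $(-\Delta)_p^s u = F$, and then compare the resulting identity with the desired left-hand side through a pointwise algebraic inequality driven by the convexity of $\Phi$. Concretely, given a nonnegative $\varphi\in W^{s,p}_0(\Omega)$, I would take
\[
\psi := g(u)\,\varphi, \qquad g(t):=|\Phi'(t)|^{p-2}\Phi'(t),
\]
as a test function. Since $u\in L^\infty(\Omega)$ and $\Phi\in C^1(\mathbb R)$, the function $g$ is bounded on the range of $u$, so $g(u)\,\varphi$ is well defined and, by a standard composition argument (direct when $p\ge 2$, via a preliminary regularization $\Phi\mapsto\Phi_\delta$ with $|\Phi_\delta'|\ge\delta>0$ when $1<p<2$), it belongs to $W^{s,p}_0(\Omega)$. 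Plugging $\psi$ into the weak formulation immediately yields
\[
\int_{\mathbb R^{2N}}\frac{|u(x)-u(y)|^{p-2}(u(x)-u(y))(\psi(x)-\psi(y))}{|x-y|^{N+sp}}\,dx\,dy = \int_\Omega F\,|\Phi'(u)|^{p-2}\Phi'(u)\,\varphi\,dx,
\]
which is already exactly the right-hand side of the claimed inequality.

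The crux of the proof is then the pointwise algebraic inequality: for every $a,b\in\mathbb R$, every $\alpha,\beta\ge 0$, and every convex $C^1$ function $\Phi$,
\[
|\Phi(a)-\Phi(b)|^{p-2}\bigl(\Phi(a)-\Phi(b)\bigr)(\alpha-\beta) \le |a-b|^{p-2}(a-b)\bigl(g(a)\,\alpha - g(b)\,\beta\bigr).
\]
I would establish this by case analysis, assuming without loss of generality $a\ge b$. Convexity of $\Phi$ then gives the two-sided bound $\Phi'(b)(a-b)\le \Phi(a)-\Phi(b)\le \Phi'(a)(a-b)$, which after composing with the monotone map $t\mapsto |t|^{p-2}t$ furnishes two linear estimates for the left-hand side: the one with constant $g(a)(a-b)^{p-1}$ is the sharp bound when $\alpha\ge\beta$, and the one with constant $g(b)(a-b)^{p-1}$ when $\alpha<\beta$. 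In each subcase the remaining leftover term is absorbed by monotonicity of $\Phi'$. Applying the resulting inequality pointwise with $a=u(x)$, $b=u(y)$, $\alpha=\varphi(x)$, $\beta=\varphi(y)$, integrating against the kernel $|x-y|^{-N-sp}$, and invoking the test-function identity above to evaluate the integrated right-hand side, yields the proposition.

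The main obstacle, in my view, is the pointwise algebraic inequality itself: the difficulty is that $\Phi$ need not be monotone, so $\Phi'(a)$ and $\Phi'(b)$ may have opposite signs, which forces a genuine case split on the signs of $\Phi'(a)$, $\Phi'(b)$, and $\alpha-\beta$, with the monotonicity of $\Phi'$ from convexity used separately in each regime to close the estimate. A secondary and more routine issue is the admissibility of $\psi$ as a test function when $1<p<2$ or when $\Phi'$ vanishes on the range of $u$, both of which are handled by the regularization $\Phi\mapsto\Phi_\delta$ described above, the summability hypothesis $F\in L^q(\Omega)$ with $q>N/(sp)$ ensuring enough integrability to pass to the limit $\delta\to 0^+$ by dominated convergence.
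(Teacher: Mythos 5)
The paper does not prove this Proposition at all: immediately before the statement it writes ``we recall the following result from [BP, Lemma~3.3]'', so Proposition~\ref{convex} is imported verbatim from Brasco--Parini. Your proposal, however, is a correct reconstruction of the argument in that reference, and the key ingredients you identify are the right ones: test the equation with $\psi=|\Phi'(u)|^{p-2}\Phi'(u)\,\varphi$ (admissible up to regularization since $u\in L^\infty$), which produces the right-hand side exactly, and conclude from a pointwise discrete convexity inequality $J_p(\Phi(a)-\Phi(b))(\alpha-\beta)\le J_p(a-b)\bigl(J_p(\Phi'(a))\alpha-J_p(\Phi'(b))\beta\bigr)$ for $\alpha,\beta\ge0$, where $J_p(t)=|t|^{p-2}t$. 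Your inequality is correct, and your case analysis does close, but it is unnecessarily involved: since $J_p$ is increasing and multiplicative ($J_p(st)=J_p(s)J_p(t)$), the two-sided convexity bound $\Phi'(b)(a-b)\le\Phi(a)-\Phi(b)\le\Phi'(a)(a-b)$ transforms into $J_p(\Phi'(b))J_p(a-b)\le J_p(\Phi(a)-\Phi(b))\le J_p(\Phi'(a))J_p(a-b)$; multiplying the upper bound by $\alpha\ge0$, the lower bound by $-\beta\le0$, and adding gives the inequality directly, with no split on the sign of $\alpha-\beta$ or of $\Phi'$. The only point worth flagging in the regularization step is that admissibility of $\psi$ for $\varphi\in W^{s,p}_0(\Omega)$ not bounded requires that $F|\Phi'(u)|^{p-1}\varphi$ be integrable, which is exactly where the hypothesis $q>N/(sp)\ge (p_s^\ast)'$ is used; you gesture at this correctly.
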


\begin{lemma}
\label{lem:apiori}
Let $\{u_n\}_{n\in {\mathbb N}}\subset W^{s,p}_0(\Omega)\cap L^\infty(\Omega)$ be the sequence of
solution to \eqref{prob-approx} provided by Proposition \ref{pro:solapprox}. Let
$\gamma>1$ and $f\in L^{1}(\Omega)$. Then $\{u_n^{(\gamma+p-1)/p}\}_{n\in {\mathbb N}}$ is bounded in $W^{s,p}_0(\Omega)$.
\end{lemma}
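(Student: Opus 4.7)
The plan is to apply Proposition \ref{convex} with the specific convex function $\Phi(t) := (t^+)^\beta$, where $\beta := (\gamma+p-1)/p$, and with the nonnegative test function $\varphi = u_n^\beta$. The parameter $\beta$ is chosen precisely so that the exponent arithmetic works out (see below), and it satisfies $\beta > 1$ exactly because $\gamma > 1$. This last fact is important for regularity: for $\beta > 1$ the derivative $\Phi'(t) = \beta (t^+)^{\beta-1}$ extends continuously to $0$ at the origin, so $\Phi \in C^1(\mathbb{R})$, and convexity is immediate since $\Phi''\geq 0$ on $(0,\infty)$ and $\Phi'' = 0$ on $(-\infty,0)$. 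The hypotheses of Proposition \ref{convex} are fulfilled: by Proposition \ref{pro:solapprox}, $u_n \in W^{s,p}_0(\Omega)\cap L^\infty(\Omega)$; and the right-hand side $F:= f_n/(u_n+1/n)^\gamma$ is dominated by $n^{\gamma+1}$, hence lies in $L^\infty(\Omega) \subset L^q(\Omega)$ for every $q > N/(sp)$. Moreover $u_n^\beta \in W^{s,p}_0(\Omega)\cap L^\infty(\Omega)$ (by composition with a locally Lipschitz function vanishing at $0$), and $u_n^\beta \geq 0$, so it is an admissible test function.

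With these choices, the left-hand side of the inequality from Proposition \ref{convex} becomes exactly
\begin{equation*}
\int_{\mathbb{R}^{2N}} \frac{|u_n^\beta(x) - u_n^\beta(y)|^p}{|x-y|^{N+sp}}\,dx\,dy = [u_n^\beta]_{D^{s,p}(\mathbb{R}^N)}^p,
\end{equation*}
whereas, using $\Phi'(u_n) = \beta\, u_n^{\beta-1}$, the right-hand side reads
\begin{equation*}
\beta^{p-1}\int_\Omega \frac{f_n(x)}{(u_n+1/n)^\gamma}\, u_n^{(\beta-1)(p-1)+\beta}\,dx.
\end{equation*}
The key algebraic simplification is $(\beta-1)(p-1)+\beta = \beta p - p + 1 = (\gamma + p - 1) - p + 1 = \gamma$. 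Consequently, using $u_n^\gamma \leq (u_n + 1/n)^\gamma$ and $f_n \leq f$, the right-hand side is bounded by $\beta^{p-1}\|f\|_{L^1(\Omega)}$, which yields
\begin{equation*}
[u_n^\beta]_{D^{s,p}(\mathbb{R}^N)}^p \leq \beta^{p-1}\|f\|_{L^1(\Omega)},
\end{equation*}
uniformly in $n$. Since $u_n^\beta$ vanishes outside $\Omega$, this is the desired boundedness in $W^{s,p}_0(\Omega)$.

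The only subtle point, and the one worth checking with care, is that $\Phi(t) = (t^+)^\beta$ genuinely fits the class of test functions for Proposition \ref{convex}: convexity is clear, but $C^1$ regularity at the origin hinges on $\beta > 1$, which is why the argument is restricted to $\gamma > 1$. Everything else is the exponent identity $(\beta-1)(p-1)+\beta = \gamma$, which is the very reason for the specific definition of $\beta$ and explains why this is the ``natural'' quantity to bound when $\gamma > 1$.
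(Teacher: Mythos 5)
Your argument is correct and follows the same route as the paper's proof: apply Proposition \ref{convex} with $\Phi(s)=s^{(\gamma+p-1)/p}$ and $F=f_n/(u_n+1/n)^\gamma$, test with $\varphi=\Phi(u_n)$, and observe that the resulting exponent on $u_n$ on the right-hand side collapses exactly to $\gamma$, so that $f_n\le f$ and $u_n\le u_n+1/n$ give a uniform $L^1$-bound. The paper writes the simplification $|\Phi'(u_n)|^{p-1}\Phi(u_n)\le C\,u_n^\gamma$ with an unnamed constant rather than the explicit $\beta^{p-1}$, but the computation and conclusion are identical.
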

\begin{proof}	
We can apply Proposition~\ref{convex} to each $u_n\geq 0$ by choosing
$$
F(x):=\frac{f_n(x)}{(u_n+1/n)^\gamma}\in L^\infty(\Omega),\qquad \Phi(s):=s^{(\gamma+p-1)/p},\,\,\, s\geq 0,
$$
by noticing that $\Phi$ is $C^1$ and convex on ${\mathbb R}^+$ since $\gamma>1$. Then
	\begin{align*}
&	\int_{\mathbb{R}^{2N}} \frac{|\Phi(u_n)(x) - \Phi(u_n)(y)|^{p-2}\, (\Phi(u_n)(x) - \Phi(u_n)(y))\, (\varphi(x) - \varphi(y))}{|x - y|^{N+s\,p}}\, dx\, dy \\
	&\leq\int_{\Omega} \frac{f_n(x)}{(u_n+1/n)^\gamma}|\Phi'(u_n)|^{p-2}\Phi'(u_n)\varphi\, dx,
	\end{align*}
	for every $n$ and all nonnegative function $\varphi \in W^{s,p}_0(\Omega)$. By choosing $\varphi:=\Phi(u_n)$ as test function
	(which belongs to $W^{s,p}_0(\Omega),$ since $\Phi$ is Lipschitz on bounded intervals), we infer
		\begin{equation}
		\label{bound}
			\int_{\mathbb{R}^{2N}} \frac{|\Phi(u_n)(x) - \Phi(u_n)(y)|^{p}}{|x - y|^{N+s\,p}}\, dx\, dy
		\leq\int_{\Omega} \frac{f_n(x)}{(u_n+1/n)^\gamma}|\Phi'(u_n)|^{p-1}\Phi(u_n)\, dx.
		\end{equation}
		Note that
		$$
		|\Phi'(u_n)|^{p-1}\Phi(u_n)\leq C u_n^\gamma,\quad\,\,\,
		\text{for any $n\in {\mathbb N}$ and some $C=C(\gamma,p)>0$.}
		$$
		In turn,
		$$
		\int_{\Omega} \frac{f_n(x)}{(u_n+1/n)^\gamma}|\Phi'(u_n)|^{p-1}\Phi(u_n)\, dx\leq C\int_{\Omega} |f_n|\, dx\leq C\int_{\Omega} |f|\, dx,
		$$
		since $f_n\leq f$. From inequality~\eqref{bound} it follows that
		$\{\Phi(u_n)\}_{n\in {\mathbb N}}$ is bounded in $W^{s,p}_0(\Omega)$.
\end{proof}

\begin{lemma}\label{lem:dino}
	Let $q>1$ and $\e>0$. In  the plane $\mathbb{R}^2$ with the notation $p=(x,y)$, let us set
	\[
	S_\e^x\,:=\{x\geq \e \}\cap\{y\geq 0\}\qquad  S_\e^y\,:=\{y\geq \e \}\cap\{x\geq 0\}\,.
	\]
	Then, we have that
	\begin{equation}\label{fgjsfgsjf}
	|x^q-y^q|\geq \e^{q-1}|x-y|\qquad\text{in}\quad S_\e^x\cup S_\e^y\,.
	\end{equation}
\end{lemma}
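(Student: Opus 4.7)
The plan is to reduce the two-dimensional statement to a one-variable convexity argument. Because the inequality $|x^q - y^q| \geq \varepsilon^{q-1}|x-y|$ is invariant under swapping $x$ and $y$, and the exchange $(x,y) \mapsto (y,x)$ maps $S_\varepsilon^x$ onto $S_\varepsilon^y$, it suffices to prove the claim for $(x,y) \in S_\varepsilon^x$, i.e.\ when $x \geq \varepsilon$ and $y \geq 0$. Setting $a := \max(x,y)$ and $b := \min(x,y)$, one checks that $a \geq \varepsilon$ and $0 \leq b \leq a$ (if $y > x$ then in fact $y > \varepsilon$ and $b = x \geq 0$). Since $t \mapsto t^q$ is increasing on $[0,\infty)$, the target inequality \eqref{fgjsfgsjf} is equivalent to
\[
h(b) := a^q - b^q - \varepsilon^{q-1}(a-b) \geq 0 \qquad \text{for all } b \in [0,a].
\]

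I would then verify two boundary values and a concavity property. At the endpoints, $h(a) = 0$ and
\[
h(0) = a^q - \varepsilon^{q-1} a = a\bigl(a^{q-1} - \varepsilon^{q-1}\bigr) \geq 0,
\]
where the last inequality uses $a \geq \varepsilon$ together with $q > 1$. Next, $h''(b) = -q(q-1)b^{q-2} < 0$ on $(0,a)$, so $h$ is concave on $[0,a]$ (and continuous up to the endpoints). A concave function on an interval lies above the chord joining its endpoint values, hence
\[
h(b) \;\geq\; \frac{a-b}{a}\,h(0) + \frac{b}{a}\,h(a) \;=\; \frac{a-b}{a}\,h(0) \;\geq\; 0,
\]
which is exactly \eqref{fgjsfgsjf}.

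There is essentially no obstacle here; the only point to handle with some care is that the naive approach using $\int_y^x q t^{q-1}\,dt \geq q\varepsilon^{q-1}(x-y)$ gives the estimate only in the subcase $y \geq \varepsilon$, and fails to directly yield $\varepsilon^{q-1}(x-y)$ when $0 \leq y < \varepsilon \leq x$ (the bound one gets involves $x - \varepsilon$ instead of $x - y$). The concavity route above sidesteps this case split and produces the inequality on the whole region $S_\varepsilon^x \cup S_\varepsilon^y$ in one shot.
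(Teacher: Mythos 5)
Your proof is correct. The reduction to $S_\varepsilon^x$ by symmetry is clean, the function $h(b) = a^q - b^q - \varepsilon^{q-1}(a-b)$ is indeed concave on $[0,a]$ (since $q>1$ makes $b\mapsto b^q$ convex), and the endpoint computations $h(a)=0$ and $h(0)=a(a^{q-1}-\varepsilon^{q-1})\geq 0$ together with the chord inequality give the result at once.

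The paper takes a slightly different route: after the same WLOG reduction $x\geq y$, it splits into two cases. When $y\geq\varepsilon$ it invokes the mean value theorem, writing $x^q-y^q=q\lambda^{q-1}(x-y)$ with $\lambda\in(y,x)$ and noting $q\lambda^{q-1}\geq q\varepsilon^{q-1}\geq\varepsilon^{q-1}$. When $0\leq y<\varepsilon\leq x$ it uses the convexity of $t\mapsto t^q$ in the form of the three-chords inequality, $\frac{x^q-y^q}{x-y}\geq\frac{x^q-0}{x-0}=x^{q-1}\geq\varepsilon^{q-1}$. Both proofs rest on the convexity of $t\mapsto t^q$ in the end, but you package it as concavity of the auxiliary function $h$ and the above-the-chord property, which removes the case split entirely; this is arguably a small simplification, since (as you correctly point out in your closing remark) the MVT/integral bound degenerates in the regime $0\leq y<\varepsilon$. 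In fact the paper's second (convexity) argument already covers the full range $0\leq y<x$ with $x\geq\varepsilon$, so its MVT case is not strictly necessary either; your version just makes that economy explicit.
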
\label{vicesindaco}
\begin{proof}
	With no loss of generality, we may assume that $x\geq y$.
	Let us first note that
	\begin{equation}\nonumber
	x^q-y^q=q\lambda^{q-1}(x-y),\qquad\text{for some } \lambda\in (y,x)\,.
	\end{equation}
	Whence \eqref{fgjsfgsjf} holds true, since $q>1$, if $(x,y)\in  S_\e^x\cap S_\e^y$, namely if $y\geq \e$ in the case that we are considering.
	Then, let us deal with the case $0\leq y<\e\leq x$. Since $t\mapsto t^q$ is (strictly) convex for $q>1$, then we have
	\begin{equation}\nonumber
	\frac{x^q-y^q}{x-y}\geq \frac{x^q}{x}=x^{q-1}\geq \e^{q-1}\,.
	\end{equation}
	Thus, inequality  \eqref{fgjsfgsjf} is proved.
\end{proof}

\noindent
Next we turn to the existence result for $\gamma>1$.

\begin{theorem}[Existence for $\gamma>1$]
	\label{thm:exgamma>1}
	Let $f\geq0$, $f\in L^1(\Omega)$ and $\gamma>1$.	Then problem \eqref{prob} admits a weak a solution $u \in W^{s,p}_{{\rm loc}}(\Omega)$
	with $u^{(\gamma+p-1)/p}\in W^{s,p}_0(\Omega)$.
\end{theorem}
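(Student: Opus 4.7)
I would build the solution $u$ as the pointwise monotone limit of the approximants $u_n$ from Proposition~\ref{pro:solapprox}. Setting $q := (\gamma + p - 1)/p \in (1, \infty)$, Lemma~\ref{lem:apiori} yields a uniform $W^{s,p}_0(\Omega)$-bound on $\{u_n^q\}$, so up to a subsequence $u_n^q \rightharpoonup v$ weakly in $W^{s,p}_0(\Omega)$ and a.e. By the monotonicity of Lemma~\ref{lem:monotonicity} and the positivity bound $u_n \geq \sigma_K > 0$ on every $K \Subset \Omega$, there is $u \geq 0$ with $u_n \uparrow u$ a.e., $u \geq \sigma_K$ on $K$, and necessarily $v = u^q$. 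Hence $u^q \in W^{s,p}_0(\Omega)$ and, via the Sobolev embedding, $u \in L^{q p_s^*}(\Omega) \subset L^{p-1}(\Omega)$, so $u$ belongs to the functional class prescribed by the definition of weak solution.

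\smallskip

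To show $u \in W^{s,p}_{{\rm loc}}(\Omega)$ I would apply Lemma~\ref{vicesindaco} with exponent $q$ and $\varepsilon = \sigma_K$: since $u_n \geq \sigma_K$ on $K$, the identity $(q-1)p = \gamma - 1$ gives
$$
|u_n(x) - u_n(y)|^p \leq \sigma_K^{-(\gamma - 1)}\,|u_n^q(x) - u_n^q(y)|^p \quad \text{on } K \times K.
$$
Dividing by $|x-y|^{N+sp}$, integrating over $K \times K$ and invoking Lemma~\ref{lem:apiori}, one obtains a uniform bound on $[u_n]_{W^{s,p}(K)}$; Fatou's lemma and a.e.\ convergence then yield $u \in W^{s,p}_{{\rm loc}}(\Omega)$. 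The local bound also gives $u_n \rightharpoonup u$ in $W^{s,p}(K')$ along a subsequence, for any $K' \Subset \Omega$.

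\smallskip

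It remains to pass to the limit in the weak formulation \eqref{eq:equazPn} against an arbitrary $\varphi \in C_c^\infty(\Omega)$ with ${\rm supp}\,\varphi \subset K \Subset K' \Subset \Omega$. On the right-hand side, Lemma~\ref{lem:monotonicity} furnishes the uniform bound $|f_n \varphi /(u_n + 1/n)^\gamma| \leq \sigma_K^{-\gamma}\|\varphi\|_\infty f \in L^1(\Omega)$, and dominated convergence produces the limit $\int_\Omega f \varphi / u^\gamma\,dx$. For the left-hand side I would split $\mathbb{R}^{2N}$ as $(K' \times K') \cup (\mathbb{R}^{2N} \setminus (K' \times K'))$. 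On $K' \times K'$, the uniform local $W^{s,p}(K')$-bound together with a.e.\ convergence gives, via the standard weak-$L^{p'}$ identification argument already deployed in the compactness part of Proposition~\ref{pro:solapprox}, the desired local limit. On the complement, the integrand vanishes unless $x \in K$ or $y \in K$, in which case $|x - y| \geq d_0 := {\rm dist}(K, \partial K') > 0$, so that the monotone pointwise bound $u_n \leq u$ combined with $u \in L^{p-1}(\Omega)$ produces a fixed $L^1(\mathbb{R}^{2N})$ dominator, and dominated convergence concludes. The main obstacle is precisely this tail analysis: unlike the case $\gamma \leq 1$, the approximants $u_n$ are only locally in $W^{s,p}$, so weak convergence in the global Gagliardo space is unavailable, and one must combine Lemma~\ref{vicesindaco} (to recover local Gagliardo control from the $W^{s,p}_0$-bound on $u_n^q$) with the global $L^{q p_s^*}$ integrability of $u$ (to dominate the nonlocal tail against compactly supported test functions).
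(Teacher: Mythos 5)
Your proof is correct but handles the passage to the limit on the left-hand side of the weak formulation by a genuinely different mechanism than the paper. The preliminary steps coincide: the uniform bound on $u_n^{(\gamma+p-1)/p}$ from Lemma~\ref{lem:apiori}, monotone a.e.\ convergence to $u$, Fatou to get $u^{(\gamma+p-1)/p}\in W^{s,p}_0(\Omega)$, and Lemma~\ref{vicesindaco} to transfer this into local Gagliardo control of $u_n$ and $u$. For the nonlocal convolution term, the paper's route is a Vitali argument: it bounds the integrand difference $(|u_n(x)-u_n(y)|+|u(x)-u(y)|)^{p-2}|\bar u_n(x)-\bar u_n(y)||\varphi(x)-\varphi(y)|$, uses H\"older on $\mathcal Q_\varphi$ together with Lemma~\ref{vicesindaco} to reduce everything to the uniformly bounded $[u_n^{(\gamma+p-1)/p}]_{W^{s,p}}$, and from this gets a uniformly small tail outside some compact $\mathcal K\subset\mathbb{R}^{2N}$ plus uniform absolute continuity on $\mathcal K$, so Vitali's theorem closes the argument. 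You instead split $\mathbb R^{2N}$ into a box $K'\times K'$ (containing $\operatorname{supp}\varphi=K$) and its complement. On the box you invoke the uniform $W^{s,p}(K')$ bound, a.e.\ convergence, and the standard bounded-in-$L^{p'}$-plus-a.e.\ weak convergence lemma, pairing with $(\varphi(x)-\varphi(y))/|x-y|^{(N+sp)/p}\in L^p$. On the tail you observe that the integrand vanishes unless $x\in K$ or $y\in K$, so that $|x-y|\ge d_0:=\operatorname{dist}(K,\partial K')>0$, and you build an $n$-independent $L^1$ dominator of the form $C(u(x)^{p-1}+u(y)^{p-1})\|\varphi\|_\infty/|x-y|^{N+sp}$ using the monotone bound $u_n\le u$ and the fact that $u\in L^{p-1}(\Omega)$ with $u$ supported in the bounded set $\Omega$; dominated convergence then handles the tail. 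Both routes crucially rely on Lemma~\ref{vicesindaco} and on the positivity $u_n\ge\sigma$ on compacts; the paper's uniform-integrability route treats the whole error directly and does not need the monotone pointwise domination $u_n\le u$, whereas your route trades that for a more elementary dominated-convergence argument on a geometrically separated tail. Both are valid.
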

\begin{proof}
	In light of Lemma~\ref{lem:apiori}, the sequence $\{u_n\}_{n\in {\mathbb N}}$ of
	solution to \eqref{prob-approx} of Proposition \ref{pro:solapprox} satisfies
	\begin{equation}\label{eq:gerry1}
	\sup_{n\in {\mathbb N}}\left[u_n^{(\gamma+p-1)/p}\right]_{W^{s,p}(\mathbb{R}^N)} \leq C.
	\end{equation}
Since $\{u_n\}_{n\in {\mathbb N}}$ is increasing it admits pointwise limit $u$ as $n\to\infty$. In particular, by Fatou's lemma
\begin{equation}\label{eq:gerry2}
\left[u^{(\gamma+p-1)/p}\right]_{W^{s,p}(\mathbb{R}^N)}\leq \liminf_n\left[u_n^{(\gamma+p-1)/p}\right]_{W^{s,p}(\mathbb{R}^N)} \leq C.
\end{equation}
Then $u^{(\gamma+p-1)/p}\in W^{s,p}_0(\Omega)$ and, in particular $u\in L^p(\Omega)$ via the Sobolev embedding.
Notice also that, by virtue of Lemma~\ref{lem:monotonicity},
for all $K\Subset\Omega$ there exists $\sigma_K>0$ such that $u(x)\geq \sigma_K>0$ for a.e.\ $x\in K$.
Therefore, in light of Lemma \ref{lem:dino}, we have
\begin{equation*}
\frac{|u(x) - u(y)|^{p}}{|x - y|^{N+s\,p}}\leq \sigma^{1-\gamma}_K\frac{|u^{\frac{\gamma+p-1}{p}}(x)-u^{\frac{\gamma+p-1}{p}}(y)|^p}{|x - y|^{N+s\,p}},
\qquad x,y\in K,\quad K\Subset{\mathbb R} ^{N}.
\end{equation*}
This yields
$$u \in W^{s,p}_{{\rm loc}}(\Omega).$$ We have, for any $n\in {\mathbb N}$
\begin{equation}\label{eq:equazPn-2}
\int_{\mathbb{R}^{2N}} \frac{|u_n(x) - u_n(y)|^{p-2}\, (u_n(x) - u_n(y))\, (\varphi(x) - \varphi(y))}{|x - y|^{N+s\,p}}\, dx\, dy
=\int_{\Omega} \frac{f_n(x)}{(u_n+1/n)^\gamma} \,\varphi\, dx,
\end{equation}
for all $\varphi \in C^\infty_c(\Omega)$. In order to pass to the limit in \eqref{eq:equazPn-2}, we observe the following.
By the elementary inequality $||\xi|^{p-2}\xi-|\xi'|^{p-2}\xi'|\leq C(|\xi|+|\xi'|)^{p-2}|\xi-\xi'|$ for $\xi,\xi'\in{\mathbb R}$ with $|\xi|+|\xi'|>0,$ we get
\begin{eqnarray}\label{eq:un-u}
&&\Bigg|\int_{\mathbb{R}^{2N}} \frac{|u_n(x) - u_n(y)|^{p-2}\, (u_n(x) - u_n(y))\, (\varphi(x) - \varphi(y))}{|x - y|^{N+s\,p}}\, dx\, dy-\\\nonumber &&\int_{\mathbb{R}^{2N}} \frac{|u(x) - u(y)|^{p-2}\, (u(x) - u(y))\, (\varphi(x) - \varphi(y))}{|x - y|^{N+s\,p}}\, dx\, dy\Bigg|\\\nonumber
&&\leq \int_{\mathbb{R}^{2N}}  \frac{(|u_n(x) - u_n(y)|+|u(x) - u(y)|)^{p-2}|\bar u_n(x)-\bar u_n(y)||\varphi(x) - \varphi(y)|}{|x - y|^{N+s\,p}}\, dx\, dy,
\end{eqnarray}
where $\bar u_n(x):= u_n(x)-u(x)$.
Let us fix $\varepsilon >0$.

We {\em claim} that there exist a compact $\mathcal K \subset \mathbb{R}^{2N}$ such that
\begin{equation}\label{eq:un-u1}
 \int_{\mathbb{R}^{2N}\setminus \mathcal K}  \frac{(|u_n(x) - u_n(y)|+|u(x) - u(y)|)^{p-2}|\bar u_n(x)-\bar u_n(y)||\varphi(x) - \varphi(y)|}{|x - y|^{N+s\,p}}\, dx\, dy\leq \frac{\varepsilon}{2},
\end{equation}
for all $n\in \mathbb N$. Let us set
\[
\mathcal S_\varphi\,:={\rm supp} \,\varphi \qquad \mathcal Q_\varphi\,:=\,\mathbb{R}^{2N}\setminus \big(\mathcal S_\varphi^c\times\mathcal S_\varphi^c\big)\,.
\]
By triangular  and H\"older inequalities we get
\begin{eqnarray}\label{eq:un-u2}\\\nonumber
&&\int_{\mathbb{R}^{2N}\setminus \mathcal K}  \frac{(|u_n(x) - u_n(y)|+|u(x) - u(y)|)^{p-2}|\bar u_n(x)-\bar u_n(y)||\varphi(x) - \varphi(y)|}{|x - y|^{N+s\,p}}\, dx\, dy\\\nonumber
&&= \int_{\mathcal Q_\varphi\setminus \mathcal K}  \frac{(|u_n(x) - u_n(y)|+|u(x) - u(y)|)^{p-1}|\varphi(x) - \varphi(y)|}{|x - y|^{N+s\,p}}\, dx\, dy\\\nonumber
&&\leq \left(\int_{\mathcal Q_\varphi\setminus \mathcal K}  \frac{(|u_n(x) - u_n(y)|+|u(x) - u(y)|)^{p}}{|x - y|^{N+s\,p}}\, dx\, dy\right)^{\frac{p-1}{p}}\left(\int_{\mathcal Q_\varphi\setminus \mathcal K} \frac{|\varphi(x) - \varphi(y)|^{p}}{|x - y|^{N+s\,p}}\, dx\, dy \right)^{\frac 1p}\\\nonumber
&&=\left(\int_{\mathcal Q_\varphi\setminus \mathcal K} \frac{(|u_n(x) - u_n(y)|+|u(x) - u(y)|)^{p}}{|x - y|^{N+s\,p}}\, dx\, dy\right)^{\frac{p-1}{p}}\left(\int_{\mathbb{R}^{2N}\setminus \mathcal K} \frac{|\varphi(x) - \varphi(y)|^{p}}{|x - y|^{N+s\,p}}\, dx\, dy \right)^{\frac 1p}.
\end{eqnarray}
By Lemma \ref{lem:monotonicity}, there exists $\sigma_{\mathcal S_\varphi}>0$ independent of $n$ such that $u(x)\geq \sigma_{\mathcal S_\varphi}$ for a.e. $x
\in \mathcal S_\varphi$. Moreover, by using Lemma \ref{lem:dino}  with $q={(\gamma+p-1)}/{p}$, we have
\begin{align}\label{eq:lagrangedino}
&\frac{(|u_n(x) - u_n(y)|+|u(x) - u(y)|)^{p}}{|x - y|^{N+s\,p}}  \nonumber \\
&\leq C(p)\frac{|u_n(x) - u_n(y)|^p+|u(x) - u(y)|^{p}}{|x - y|^{N+s\,p}}\\\nonumber
&\leq C(p)\sigma_{\mathcal S_\varphi}^{1-
\gamma}\frac{|u_n^{\frac{\gamma+p-1}{p}}(x)-u_n^{\frac{\gamma+p-1}{p}}(y)|^p+|u^{\frac{\gamma+p-1}{p}}(x)-u^{\frac{\gamma+p-1}{p}}(y)|^p}{|x - y|^{N+s\,p}},\quad\,\,\text{a.e.\ $(x,y)\in \mathcal Q_\varphi$.}
\end{align}
Then from  \eqref{eq:un-u2} and \eqref{eq:lagrangedino} we infer that
\begin{eqnarray}\nonumber\\\nonumber
&&\int_{\mathbb{R}^{2N}\setminus \mathcal K}  \frac{(|u_n(x) - u_n(y)|+|u(x) - u(y)|)^{p-2}|\bar u_n(x)-\bar u_n(y)||\varphi(x) - \varphi(y)|}{|x - y|^{N+s\,p}}\, dx\, dy\\\nonumber
&&\leq C\left(\int_{\mathbb{R}^{2N}} \frac{|u_n^{\frac{\gamma+p-1}{p}}(x)-u_n^{\frac{\gamma+p-1}{p}}(y)|^p}{|x - y|^{N+s\,p}}\, dx\, dy\right)^{\frac{p-1}{p}}\left(\int_{\mathbb{R}^{2N}\setminus \mathcal K} \frac{(|\varphi(x) - \varphi(y)|)^{p}}{|x - y|^{N+s\,p}}\, dx\, dy \right)^{\frac 1p}\\\nonumber
&&+ C\left(\int_{\mathbb{R}^{2N}} \frac{|u^{\frac{\gamma+p-1}{p}}(x)-u^{\frac{\gamma+p-1}{p}}(y)|^p}{|x - y|^{N+s\,p}}\, dx\, dy\right)^{\frac{p-1}{p}}\left(\int_{\mathbb{R}^{2N}\setminus \mathcal K} \frac{(|\varphi(x) - \varphi(y)|)^{p}}{|x - y|^{N+s\,p}}\, dx\, dy \right)^{\frac 1p}
\\\nonumber &&\leq C \left(\int_{\mathbb{R}^{2N}\setminus \mathcal K} \frac{|\varphi(x) - \varphi(y)|^{p}}{|x - y|^{N+s\,p}}\, dx\, dy \right)^{\frac 1p},
\end{eqnarray}
with $C=C(p,\gamma,\sigma_{\mathcal S_\varphi})$ and where we  used \eqref{eq:gerry1} and \eqref{eq:gerry2}. Then, since $\varphi \in C^\infty_c(\Omega)$, there exists  $\mathcal K=\mathcal K(\varepsilon)$ such that \eqref{eq:un-u1} holds, proving the {\em claim}.

On the other hand,  consider now an arbitrary measurable subset $E\subset {\mathcal K}$. Arguing as in \eqref{eq:un-u2} and \eqref{eq:lagrangedino},
we reach the inequality
\begin{align}\nonumber
&\int_{E}  \frac{(|u_n(x) - u_n(y)|+|u(x) - u(y)|)^{p-2}|\bar u_n(x)-\bar u_n(y)||\varphi(x) - \varphi(y)|}{|x - y|^{N+s\,p}}\, dx\, dy\\\nonumber
& \leq C \left(\int_{E}\frac{|\varphi(x) - \varphi(y)|^{p}}{|x - y|^{N+s\,p}}\, dx\, dy \right)^{\frac 1p},
\end{align}
that is
\begin{equation}\nonumber
\int_{E}  \frac{(|u_n(x) - u_n(y)|+|u(x) - u(y)|)^{p-2}|\bar u_n(x)-\bar u_n(y)||\varphi(x) - \varphi(y)|}{|x - y|^{N+s\,p}}\, dx\, dy\rightarrow 0,
\end{equation}
uniformly on $n$, if the  Lebesgue measure of $E$ goes to zero. Moreover
\begin{equation*}
\frac{(|u_n(x) - u_n(y)|+|u(x) - u(y)|)^{p-2}|\bar u_n(x)-\bar u_n(y)||\varphi(x) - \varphi(y)|}{|x - y|^{N+s\,p}}\rightarrow 0\quad \text{a.e.\ in ${\mathbb R}^{2N}$}.
\end{equation*}
Vitali's Theorem now implies that, given $\varepsilon >0$,  there exists $\bar n >0$ such that, if $n\geq \bar n$, it follows
\begin{equation}\label{eq:un-u3}
\int_{\mathcal K}  \frac{(|u_n(x) - u_n(y)|+|u(x) - u(y)|)^{p-2}|\bar u_n(x)-\bar u_n(y)||\varphi(x) - \varphi(y)|}{|x - y|^{N+s\,p}}\, dx\, dy\leq \frac{\varepsilon}{2}.
\end{equation}
From \eqref{eq:un-u}, using \eqref{eq:un-u1} and \eqref{eq:un-u3},
we are able to pass to the limit in the left-hand side of \eqref{prob-approx}, that is
\begin{eqnarray}\nonumber
&&\lim_{n \rightarrow +\infty}\int_{\mathbb{R}^{2N}} \frac{|u_n(x) - u_n(y)|^{p-2}\, (u_n(x) - u_n(y))\, (\varphi(x) - \varphi(y))}{|x - y|^{N+s\,p}}\, dx\, dy
=
\\\nonumber
&&\int_{\mathbb{R}^{2N}} \frac{|u(x) - u(y)|^{p-2}\, (u(x) - u(y))\, (\varphi(x) - \varphi(y))}{|x - y|^{N+s\,p}}\, dx\, dy,
\end{eqnarray}
for all $\varphi \in C^\infty_c(\Omega)$.
Finally, arguing for the right-hand side as in the proof of 	Theorem~\ref{thm:exgamma<=1}, we pass to the limit in \eqref{eq:equazPn},  concluding  that
\begin{equation*}
\int_{\mathbb{R}^{2N}} \frac{|u(x) - u(y)|^{p-2}\, (u(x) - u(y))\, (\varphi(x) - \varphi(y))}{|x - y|^{N+s\,p}}\, dx\, dy
=\int_{\Omega} \frac{f(x)}{u^\gamma}\,\varphi\, dx,
\end{equation*}
for all $\varphi \in C^\infty_c(\Omega)$, namely $u$ is a solution to \eqref{prob}.
\end{proof}

\begin{remark}
	In the previous  results, if furthermore
	$$
	f\in L^{\frac{Np}{N(p-1)+sp}}(\Omega),
	$$
	then \eqref{v-f} is satisfied for all $\varphi \in W^{s,p}_0(\Omega)$ such that ${\rm supp}(\varphi) \Subset  \Omega$.
	\end{remark}

\begin{remark}
	With reference to the proof of Theorem~\ref{thm:exgamma>1}, we observe that
	$$
	\text{the sequence $\{u_n\}_{n\in {\mathbb N}}$ is bounded in $W^{\frac{sp}{\gamma+p-1}, \gamma+p-1}_0(\Omega)$.}
	$$
	In fact, since $(\gamma+p-1)/p>1$, by the H\"olderianity of the map $t\mapsto t^{p/(\gamma+p-1)}$, we get
	\begin{align*}
	\frac{|u_n(x)-u_n(y)|^{\gamma+p-1}}{|x-y|^{N+sp}}
	\leq \frac{|u_n^{(\gamma+p-1)/p}(x)-u_n^{(\gamma+p-1)/p}(y)|^p}{|x-y|^{N+sp}},\quad x,y \in {\mathbb R}^N.
	\end{align*}
	Therefore, the weak solution $u$ of Theorem~\ref{thm:exgamma>1}
	also belongs to $u\in W^{\frac{sp}{\gamma+p-1}, \gamma+p-1}_0(\Omega).$
\end{remark}

\section{Proof of  the uniqueness  results}
\label{kduhfkuer}

\noindent
Let us start defining the real valued function $g_{k}$ by
\begin{equation}\nonumber
g_{k}(s)\,:=\,
\begin{cases}
\min\{s^{-\beta}\,,\,k\}\qquad\text{if $s>0$},\\
k\qquad\qquad\quad\qquad\text{if $s\leq 0$}.
\end{cases}
\end{equation}
Then we consider the real valued function $\Phi_{k}$ defined to be the primitive of $g_{k}$ that is equal to zero for $s=1$.
Let us consequently consider the functional $J_k\,:W^{s,p}_0(\Omega)\rightarrow [-\infty\,,\,+\infty]$ defined by
\begin{equation}\nonumber
J_k(\varphi)\,:=\,\frac{1}{p}\int_{\mathbb{R}^{2N}}\frac{|\varphi(x)-\varphi(y)|^p}{|x-y|^{N+ps}}\,dx\,dy-\int_{\mathbb{R}^{N}} f(x)\Phi_{k}(\varphi)\,dx\qquad\varphi\in W^{s,p}_0(\Omega)\,.
\end{equation}
Let us now recall that, given $z\in W^{s,p}_{{\rm loc}}(\Omega)\cap L^{p-1}(\Omega)$ with   $z\geq 0$, we say that $z$ is    a weak supersolution (subsolution) to
\eqref{prob}, if
\begin{equation*}
\int_{\mathbb{R}^{2N}} \frac{|z(x) - z(y)|^{p-2}\, (z(x) - z(y))\, (\varphi(x) - \varphi(y))}{|x - y|^{N+s\,p}}\, dx\, dy
\underset{(\leq)}{\geq}\int_{\Omega} \frac{f(x)}{z^\gamma}\,\varphi\, dx\qquad \forall \varphi\in C^\infty_c(\Omega)\,,\, \varphi\geq 0\,.
\end{equation*}
For a fixed supersolution $v$, we consider  $w$  defined as the minimum of $J_k$ on the convex set
\[
\mathcal K\,:=\,\{\varphi\in W^{s,p}_0(\Omega)\,:\, 0\leq \varphi\leq v\,\,\text{a.e. in}\,\,\Omega\}\,.
\]
By direct computation, we deduce that

\begin{equation}\label{eqminxx}
\begin{split}
&\int_{\mathbb{R}^{2N}} \frac{|w(x) - w(y)|^{p-2}\, (w(x) - w(y))\, (\psi(x)-w(x)-(\psi(y)-w(y)))}{|x - y|^{N+s\,p}}\, dx\, dy\\
&\geq \int_\Omega\,f(x) \Phi_{k}'(w)(\psi-w)\quad\,\,\, \text{for}\,\,
 \psi\in w+\left(W^{s,p}_0(\Omega)\cap L^\infty_c(\Omega)\right)\,\,\text{and}\,\,0\leq \psi\leq v\,.
 \end{split}
\end{equation}
With such notation, we have the following

\begin{lemma}\label{lemmause}
For all $\psi\in C^\infty_c(\Omega)$ with $\psi\geq 0$ we have
\begin{equation}\label{ksjfskgfdfjigf}
\int_{\mathbb{R}^{2N}} \frac{|w(x) - w(y)|^{p-2}\, (w(x) - w(y))\, (\psi(x)-\psi(y))}{|x - y|^{N+s\,p}}\, dx\, dy\\
\geq \int_\Omega\,f(x) \Phi_{k}'(w)\psi dx.
\end{equation}
\end{lemma}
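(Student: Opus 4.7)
Given $\psi \in C^\infty_c(\Omega)$ with $\psi \geq 0$ and $t>0$ small, I would test \eqref{eqminxx} with the admissible competitor $\phi_t := \min\{w+t\psi, v\} \in \mathcal K$, which satisfies $\phi_t - w \in W^{s,p}_0(\Omega)\cap L^\infty_c(\Omega)$ since its support lies in $\operatorname{supp}\psi \Subset \Omega$. Decomposing $\phi_t - w = t\psi - \rho_t$ with $\rho_t := (w+t\psi-v)^+ \geq 0$, and abbreviating
\[
A_u(x,y) := |u(x)-u(y)|^{p-2}(u(x)-u(y)),\qquad \mathcal I_u(\varphi) := \int_{\mathbb{R}^{2N}} \frac{A_u(x,y)\,(\varphi(x)-\varphi(y))}{|x-y|^{N+sp}}\,dx\,dy - \int_\Omega f\,\Phi_k'(u)\,\varphi\,dx,
\]
the inequality \eqref{eqminxx} rewrites, by linearity of $\mathcal I_w$, as $t\,\mathcal I_w(\psi) \geq \mathcal I_w(\rho_t)$. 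Thus it suffices to show $\liminf_{t\to 0^+} t^{-1}\mathcal I_w(\rho_t) \geq 0$.

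\textbf{Comparing with $v$.} Since $v$ is a supersolution of \eqref{prob} and (in the relevant case $\beta=\gamma$) $\Phi_k'(v) = g_k(v) \leq v^{-\gamma}$, the supersolution inequality for $v$ extends by density from $C^\infty_c(\Omega)$ to nonnegative test functions in $W^{s,p}_0(\Omega)\cap L^\infty_c(\Omega)$, using $v\geq\sigma_K>0$ on any compact $K\subset\Omega$ to keep $f\,\Phi_k'(v)$ locally integrable. Applying it with $\rho_t$ yields $\mathcal I_v(\rho_t) \geq 0$; subtracting gives
\[
\mathcal I_w(\rho_t) \geq \int_{\mathbb{R}^{2N}} \frac{[A_w-A_v]\,(\rho_t(x)-\rho_t(y))}{|x-y|^{N+sp}}\,dx\,dy - \int_\Omega f\,[\Phi_k'(w)-\Phi_k'(v)]\,\rho_t\,dx.
\]

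\textbf{Passage to the limit.} Writing $\xi := v-w \geq 0$, one has $\rho_t/t = (\psi - \xi/t)^+ \leq \psi$ with $\rho_t(x)/t \to \tilde\psi(x) := \psi(x)\,\chi_{\{w=v\}}(x)$ pointwise a.e.\ as $t\to 0^+$. A Vitali/dominated-convergence argument, based on the bound $0\leq \rho_t/t \leq \psi$ together with H\"older's inequality against the finite quantities $\int |A_w|^{p/(p-1)}|x-y|^{-(N+sp)}$, $\int |A_v|^{p/(p-1)}|x-y|^{-(N+sp)}$ and the estimate $f\,\Phi_k'(\cdot) \leq kf \in L^1$, lets the factor $1/t$ pass through both integrals. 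The $f$-integral vanishes, since $\Phi_k'(w)=\Phi_k'(v)$ on $\{w=v\}\supset\operatorname{supp}\tilde\psi$. For the nonlocal term, a case analysis shows
\[
[A_w(x,y) - A_v(x,y)]\,(\tilde\psi(x)-\tilde\psi(y)) \geq 0 \qquad \text{a.e.\ in }\mathbb{R}^{2N}:
\]
if both $x,y\in\{w=v\}$ then $A_w=A_v$; if both lie in $\{w<v\}$ then $\tilde\psi(x)=\tilde\psi(y)=0$; in the mixed case $x\in\{w=v\}$, $y\in\{w<v\}$, the identity $v(x)=w(x)$ combined with $w(y)<v(y)$ and strict monotonicity of $r\mapsto|r|^{p-2}r$ yields $A_w(x,y)\geq A_v(x,y)$, while $\tilde\psi(x)-\tilde\psi(y)=\psi(x)\geq 0$; the symmetric mixed case is analogous. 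Consequently $\liminf_{t\to 0^+}t^{-1}\mathcal I_w(\rho_t) \geq 0$ and \eqref{ksjfskgfdfjigf} follows.

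\textbf{Main obstacle.} The delicate point is the passage to the limit in the nonlocal term: the naive Lipschitz bound $|\rho_t(x)-\rho_t(y)|/t \leq |\psi(x)-\psi(y)| + t^{-1}|\xi(x)-\xi(y)|$ is not uniformly integrable against the singular kernel $|x-y|^{-(N+sp)}$. One must instead exploit the two-sided control $0 \leq \rho_t/t \leq \psi$ and argue via Vitali's theorem, with dominating $L^1$-envelopes obtained from a H\"older pairing against the finite Gagliardo-type norms of $A_w$ and $A_v$ on $\operatorname{supp}\psi$.
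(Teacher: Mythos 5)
Your starting point coincides with the paper's: both test \eqref{eqminxx} with a competitor of the form $\min\{w+t\,(\text{test}),v\}$, decompose the increment as $t\cdot(\text{test})-\rho_t$ with $\rho_t:=(w+t\,(\text{test})-v)^+\geq 0$, and bring in the supersolution $v$ to control the $\rho_t$-contribution. The divergence is in how the $\rho_t$-terms are discharged, and that is where your argument has a real gap.

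The paper never divides by $t$ and never takes a limit of $\rho_t/t$. Setting $\mathcal G$, $\mathcal G_v$ as in Section \ref{kduhfkuer}, it establishes the \emph{pointwise} inequality $\mathcal G\leq\mathcal G_v$ — equivalently $\big(I_p(\varphi_{h,t}(x)-\varphi_{h,t}(y))-I_p(v(x)-v(y))\big)\big(\rho_t(x)-\rho_t(y)\big)\geq 0$, which follows from monotonicity of $I_p$ and the fact that $\varphi_{h,t}=v$ on $\{\rho_t>0\}$ — and then applies the supersolution property of $v$ with the (nonpositive, compactly supported) test function $\varphi_{h,t}-w-t\varphi_h=-\rho_t$ \emph{before} any limit. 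This removes every $\rho_t$-term exactly; the only remaining limit is $\varphi_{h,t}\to w$ uniformly, which is handled by dominated convergence with a transparent $L^1$-envelope, since $|\varphi_{h,t}(x)-\varphi_{h,t}(y)|\leq |w(x)-w(y)|+t|\varphi_h(x)-\varphi_h(y)|+|v(x)-v(y)|$.

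Your route instead needs $\liminf_{t\to 0^+}t^{-1}\big(\mathcal I_w(\rho_t)-\mathcal I_v(\rho_t)\big)\geq 0$, which you propose to get by letting $\rho_t/t\to\tilde\psi=\psi\chi_{\{w=v\}}$ under the integral. The pointwise limit and the positivity of the limiting integrand $[A_w-A_v](\tilde\psi(x)-\tilde\psi(y))$ are both correct, but the passage to the limit is not justified by the tools you invoke. A H\"older pairing of the kernel $[A_w-A_v]\,|x-y|^{-(N+sp)/p'}\in L^{p'}$ against $(\rho_t(x)-\rho_t(y))/(t\,|x-y|^{(N+sp)/p})$ requires a uniform bound on $[\rho_t/t]_{W^{s,p}}$, and the two-sided bound $0\leq\rho_t/t\leq\psi$ does \emph{not} give one: it controls the $L^\infty$ (hence $L^p$) norm of $\rho_t/t$ but says nothing about its increments, and the only Lipschitz-type bound available, $|\rho_t(x)-\rho_t(y)|/t\leq|\psi(x)-\psi(y)|+|(v-w)(x)-(v-w)(y)|/t$, blows up as $t\to 0$. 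So the "Vitali with $L^1$-envelopes from H\"older against the Gagliardo norms" step is not an argument; it is precisely the missing content. (The step can in fact be repaired, but by a different device: one observes that on the set where $[A_w-A_v]\,(\rho_t(x)-\rho_t(y))<0$ one necessarily has $|(v-w)(x)-(v-w)(y)|<t\,|\psi(x)-\psi(y)|$, hence $|\rho_t(x)-\rho_t(y)|\leq 2t|\psi(x)-\psi(y)|$ and $|A_w-A_v|$ is $O(t^{\min\{1,p-1\}})$ there; the negative part then vanishes in the limit and Fatou handles the positive part. This is materially different from what you sketched.)

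A minor point: you implicitly take $\beta=\gamma$ in $g_k$ when writing $\Phi_k'(v)\leq v^{-\gamma}$; that is consistent with the paper's intent but worth stating. Also, the extension of the supersolution inequality to $W^{s,p}_0(\Omega)\cap L^\infty_c(\Omega)$ test functions is used without proof in the paper as well, so no complaint there.
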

\begin{proof}
Let us consider a real valued function
 $g\in C^\infty_c(\mathbb{R})$ with $0\leq g(t) \leq 1$, $g (t)=1$
for $t\in[-1,1]$ and $g (t)=0$
for $t\in(-\infty,-2]\cup[2,\infty)$. Then, for any nonnegative $\varphi\in C^\infty_c(\Omega)$,
we set $\varphi_h\,:=\,g(\frac{w}{h})\,\varphi$ and $\varphi_{h,t}\,:=\,\min\{w+t\varphi_h\,,\,v\}$
with $h\geq 1$ and $t>0$.
We have that $\varphi_{h,t}\in w+\left(W^{s,p}_0(\Omega)\cap L^\infty_c(\Omega)\right)$ and
$0\leq \varphi_{h,t}\leq v$, so that, by \eqref{eqminxx}, we deduce that
\begin{equation*}
\begin{split}
&\int_{\mathbb{R}^{2N}} \frac{|w(x) - w(y)|^{p-2}\, (w(x) - w(y))\, (\varphi_{h,t}(x)-w(x)-(\varphi_{h,t}(y)-w(y)))}{|x - y|^{N+s\,p}}\, dx\, dy\\
&\geq \int_\Omega\,f(x) \Phi_{k}'(w)(\varphi_{h,t}-w)\,dx.
 \end{split}
\end{equation*}
By standard manipulations and by \eqref{eqminxx}, we deduce that
\begin{equation*}
\begin{split}
& {\mathbb I}_1:=\,c \int_{\mathbb{R}^{2N}} \frac{(|\varphi_{h,t}(x) - \varphi_{h,t}(y)|+|w(x) - w(y)|)^{p-2}\, (\varphi_{h,t}(x)-w(x) - (\varphi_{h,t}(y)-w(y)))^2\, }{|x - y|^{N+s\,p}}\, dx\, dy\\
&\leq \int_{\mathbb{R}^{2N}} \frac{|\varphi_{h,t}(x) - \varphi_{h,t}(y)|^{p-2}\, (\varphi_{h,t}(x) - \varphi_{h,t}(y))\, (\varphi_{h,t}(x)-w(x)-(\varphi_{h,t}(y)-w(y)))}{|x - y|^{N+s\,p}}\, dx\, dy\\
&-\int_{\mathbb{R}^{2N}} \frac{|w(x) - w(y)|^{p-2}\, (w(x) - w(y))\, (\varphi_{h,t}(x)-w(x)-(\varphi_{h,t}(y)-w(y)))}{|x - y|^{N+s\,p}}\, dx\, dy\\
&\leq \int_{\mathbb{R}^{2N}} \frac{|\varphi_{h,t}(x) - \varphi_{h,t}(y)|^{p-2}\, (\varphi_{h,t}(x) - \varphi_{h,t}(y))\, (\varphi_{h,t}(x)-w(x)-(\varphi_{h,t}(y)-w(y)))}{|x - y|^{N+s\,p}}\, dx\, dy\\
&-\int_\Omega\,f(x) \Phi_{k}'(w)(\varphi_{h,t}-w)
 \end{split}
\end{equation*}
We rewrite this as
\begin{equation}\label{cicciofriccio}
\begin{split}
&\,\,{\mathbb I}_1-\int_\Omega\,f(x)(\Phi_{k}'(\varphi_{h,t})-\Phi_{k}'(w))(\varphi_{h,t}-w)\,dx\\
&\leq \int_{\mathbb{R}^{2N}} \frac{|\varphi_{h,t}(x) - \varphi_{h,t}(y)|^{p-2}\, (\varphi_{h,t}(x) - \varphi_{h,t}(y))\, (\varphi_{h,t}(x)-w(x)-(\varphi_{h,t}(y)-w(y)))}{|x - y|^{N+s\,p}}\, dx\, dy\\
&-\int_\Omega\,f(x)\cdot \Phi_{k}'(\varphi_{h,t})(\varphi_{h,t}-w)\\
&=\int_{\mathbb{R}^{2N}}\mathcal G(x,y)\, dx dy
-\int_\Omega\,f(x)\cdot \Phi_{k}'(\varphi_{h,t})(\varphi_{h,t}-w-t\varphi_h)\\
&+t\int_{\mathbb{R}^{2N}} \frac{|\varphi_{h,t}(x) - \varphi_{h,t}(y)|^{p-2}\, (\varphi_{h,t}(x) - \varphi_{h,t}(y))\, (\varphi_h(x)-\varphi_h(y))}{|x - y|^{N+s\,p}}\, dx\, dy\\
&-t\int_\Omega\,f(x) \Phi_{k}'(\varphi_{h,t})\varphi_h\\
 \end{split}
\end{equation}
where, if $I_p(t):=|t|^{p-2}t$, we have set
\begin{equation*}
\mathcal G(x,y)\,:=
\frac{I_p(\varphi_{h,t}(x) - \varphi_{h,t}(y))\, (\varphi_{h,t}(x)-w(x)-t\varphi_h(x)-(\varphi_{h,t}(y)-w(y)-t\varphi_h(y)))}{|x - y|^{N+s\,p}}\,.
\end{equation*}
For future use, let us also set
\begin{equation*}
 \mathcal G_v(x,y)\,:=
\frac{I_p(v(x) - v(y))\, (\varphi_{h,t}(x)-w(x)-t\varphi_h(x)-(\varphi_{h,t}(y)-w(y)-t\varphi_h(y)))}{|x - y|^{N+s\,p}}\,.
\end{equation*}
Now we set $S_v:=\{\varphi_{h,t}=v\}$ and note that, actually, $S_v=\{v\leq w+t\varphi_h\}$.
We use the decomposition
$$
\mathbb{R}^{2N}=\left(S_v\cup S_v^c\right)\times \left(S_v\cup S_v^c\right).
$$
Taking into account that
$$\mathcal G(\cdot, \cdot)=0 \,\,\text{in}\,\, S_v^c\times S_v^c,$$
we deduce that
\begin{equation*}
\begin{split}
&\int_{\mathbb{R}^{2N}}\mathcal G(x,y)\, dx\, dy\\
&=\int_{S_v}\int_{S_v}\mathcal G(x,y)\, dx\, dy+\int_{S_v^c}\int_{S_v}\mathcal G(x,y)\, dx\, dy
+\int_{S_v}\int_{S_v^c}\mathcal G(x,y)\, dx\, dy\\
&\leq \int_{S_v}\int_{S_v}\mathcal G_v(x,y)\, dx\, dy+\int_{S_v^c}\int_{S_v}\mathcal G_v(x,y)\, dx\, dy
+\int_{S_v}\int_{S_v^c}\mathcal G_v(x,y)\, dx\, dy\\
&= \int_{\mathbb{R}^{2N}} \mathcal G_v(x,y)\, dx\, dy,
 \end{split}
\end{equation*}
where the  inequality follows since $\mathcal G\leq \mathcal G_v$.  In particular, to see this,
write down explicitly the expression of $\mathcal G$ and exploit the monotonicity of the real valued function $|t-t_0|^{p-2}(t-t_0)$ together with the definition of $\varphi_{h,t}$. Thence we go back to \eqref{cicciofriccio} and get that
\begin{equation}\nonumber
\begin{split}
&\,\,{\mathbb I}_1-\int_\Omega\,f(x)\cdot(\Phi_{k}'(\varphi_{h,t})-\Phi_{k}'(w))(\varphi_{h,t}-w)\,dx\\
&\leq \int_{\mathbb{R}^{2N}}\mathcal G_v(x,y)\, dx\, dy-\int_\Omega\,f(x)\cdot \Phi_{k}'(\varphi_{h,t})(\varphi_{h,t}-w-t\varphi_h)\\
&+t\int_{\mathbb{R}^{2N}} \frac{|\varphi_{h,t}(x) - \varphi_{h,t}(y)|^{p-2}\, (\varphi_{h,t}(x) - \varphi_{h,t}(y))\, (\varphi_h(x)-\varphi_h(y))}{|x - y|^{N+s\,p}}\, dx\, dy\\
&-t\int_\Omega\,f(x)\cdot \Phi_{k}'(\varphi_{h,t})\varphi_h\,.\\
 \end{split}
\end{equation}
By the definition of $\Phi_{k}$, it follows that $v$ is  a supersolution to the equation
$(-\Delta)^s_p z=\Phi_{k}'(z)$ too. Therefore, recalling  that $\varphi_{h,t}-w- t\varphi_h\leqslant 0$, we deduce that
\begin{equation}\nonumber
\begin{split}
&\,\,{\mathbb I}_1-\int_\Omega\,f(x)\cdot(\Phi_{k}'(\varphi_{h,t})-\Phi_{k}'(w))(\varphi_{h,t}-w)\,dx\\
&\leq t\int_{\mathbb{R}^{2N}} \frac{|\varphi_{h,t}(x) - \varphi_{h,t}(y)|^{p-2}\, (\varphi_{h,t}(x) - \varphi_{h,t}(y))\, (\varphi_h(x)-\varphi_h(y))}{|x - y|^{N+s\,p}}\, dx\, dy-t\int_\Omega\,f(x)\cdot \Phi_{k}'(\varphi_{h,t})\varphi_h\,.\\
 \end{split}
\end{equation}
Exploiting again the fact  that $\varphi_{h,t}-w\leqslant t\varphi_h$,  we deduce that
\begin{equation}\nonumber
\begin{split}
&\int_{\mathbb{R}^{2N}} \frac{|\varphi_{h,t}(x) - \varphi_{h,t}(y)|^{p-2}\, (\varphi_{h,t}(x) - \varphi_{h,t}(y))\, (\varphi_h(x)-\varphi_h(y))}{|x - y|^{N+s\,p}}\, dx\, dy-\int_\Omega\,f(x)\cdot \Phi_{k}'(\varphi_{h,t})\varphi_h\\
&\geq -\int_\Omega
 f(x)\cdot|\Phi_{k}'(\varphi_{h,t})-\Phi_{k}'(w)||\varphi_{h}|\,dx\,.\\
 \end{split}
\end{equation}
We can now pass to the limit for $t\rightarrow 0$ exploiting also the Lebesgue Theorem obtaining
\begin{equation}\nonumber
\begin{split}
&\int_{\mathbb{R}^{2N}} \frac{|w(x) - w(y)|^{p-2}\, (w(x) - w(y))\, (\varphi_h(x)-\varphi_h(y))}{|x - y|^{N+s\,p}}\, dx\, dy-\int_\Omega\,f(x)\cdot \Phi_{k}'(w)\varphi_h\geq 0.
 \end{split}
\end{equation}
The claim, namely the proof of \eqref{ksjfskgfdfjigf}, follows letting $h\to\infty$.
\end{proof}

\noindent
Now we are in position to prove our \emph{weak comparison principle}, namely we have the following:
\begin{theorem}\label{comparison}
Let $\gamma>0$ and let $f\in L^1(\Omega)$ be  non-negative. Let $u$ be a subsolution to  \eqref{prob} such that $u\leq 0$ on $\partial\Omega$ and let $v$ be a supersolution to
\eqref{prob}. Then, $u\leq v$ a.e. in $\Omega$.
\end{theorem}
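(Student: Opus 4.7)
The plan is to exploit the auxiliary function $w$ constructed immediately before Lemma~\ref{lemmause} from the supersolution $v$, namely the minimum of $J_k$ over $\mathcal{K}$. Since $w\in\mathcal{K}$ satisfies $0\leq w\leq v$ a.e.\ in $\Omega$, it suffices to show that for every $\varepsilon>0$ and every $k\geq\varepsilon^{-\gamma}$, one has $(u-\varepsilon)^+\leq w$ a.e.\ in $\mathbb{R}^N$; this yields $u\leq v+\varepsilon$ and hence $u\leq v$ upon letting $\varepsilon\searrow 0$.

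The natural candidate test function is $\psi_{\varepsilon}:=\bigl((u-\varepsilon)^+-w\bigr)^+$. By the boundary condition $u\leq 0$ on $\partial\Omega$ (Definition~\ref{rediri}), $(u-\varepsilon)^+\in W^{s,p}_0(\Omega)$; since also $w\in W^{s,p}_0(\Omega)$, we get $\psi_\varepsilon\in W^{s,p}_0(\Omega)$. By a cut-off and density argument, $\psi_\varepsilon$ may be plugged into the subsolution inequality for $u$ and into inequality~\eqref{ksjfskgfdfjigf} of Lemma~\ref{lemmause}. Writing $I_p(t):=|t|^{p-2}t$ and subtracting the two yields
\begin{equation*}
\int_{\mathbb{R}^{2N}}\frac{\bigl[I_p(u(x)-u(y))-I_p(w(x)-w(y))\bigr]\bigl(\psi_\varepsilon(x)-\psi_\varepsilon(y)\bigr)}{|x-y|^{N+sp}}\,dx\,dy\leq \int_\Omega f(x)\bigl[u^{-\gamma}-\Phi_k'(w)\bigr]\psi_\varepsilon\,dx.
\end{equation*}

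On ${\rm supp}\,\psi_\varepsilon\subset\{u-\varepsilon>w\}$ we have $u>\varepsilon$ and $0\leq w<u-\varepsilon<u$, so $u^{-\gamma}<w^{-\gamma}$ and, by the choice $k\geq\varepsilon^{-\gamma}$, also $u^{-\gamma}\leq\varepsilon^{-\gamma}\leq k$. Consequently $\Phi_k'(w)=\min(w^{-\gamma},k)\geq u^{-\gamma}$ there, and since $f\geq 0$ the right-hand side above is nonpositive. On the other hand, the left-hand side is nonnegative by the standard $p$-monotonicity inequality for the fractional operator (testing $(u-w)^+$-type functions, as in the proof of Lemma~\ref{lem:monotonicity} via \cite[Lemma~9]{lindglindq}), with equality forcing $\psi_\varepsilon(x)=\psi_\varepsilon(y)$ a.e.\ in $\mathbb{R}^{2N}$, hence $\psi_\varepsilon\equiv$ const. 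Since $\psi_\varepsilon\in W^{s,p}_0(\Omega)$ this constant is $0$, proving $(u-\varepsilon)^+\leq w$ and thereby $u\leq v+\varepsilon$.

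The main obstacle is to justify the use of $\psi_\varepsilon$ as a test function in both inequalities, which are originally stated for $\varphi\in C_c^\infty(\Omega)$. One must approximate $\psi_\varepsilon\in W^{s,p}_0(\Omega)$ by $C_c^\infty$ functions (via cut-off and mollification) and pass to the limit on both sides. The singular right-hand side $\int f\,u^{-\gamma}\psi_\varepsilon\,dx$ is the delicate term, but the $\varepsilon$-truncation built into $\psi_\varepsilon$ ensures that the factor $u^{-\gamma}$ is pointwise bounded by $\varepsilon^{-\gamma}$ on ${\rm supp}\,\psi_\varepsilon$; together with $f\in L^1(\Omega)$ and the analogous control $\Phi_k'(w)\leq k$, this yields $L^1$-dominated convergence and legitimates the manipulation above. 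Finally letting $\varepsilon\searrow 0$ (with $k=k(\varepsilon)\geq\varepsilon^{-\gamma}$) gives $u\leq v$ a.e.\ in $\Omega$.
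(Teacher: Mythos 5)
Your proposal follows the same strategy as the paper: build the auxiliary minimizer $w$ of $J_k$ over $\mathcal K$, test the subsolution inequality for $u$ and inequality \eqref{ksjfskgfdfjigf} for $w$ against a suitable positive part, exploit the sign of the right-hand side via the bound $u>\varepsilon$ on the support, and use monotonicity of $I_p(t)=|t|^{p-2}t$ on the left to force the test function to vanish. Note that since $w\geq 0$, your $\psi_\varepsilon=\bigl((u-\varepsilon)^+-w\bigr)^+$ coincides with $(u-w-\varepsilon)^+$, which is precisely what the paper uses. Your computation $\Phi_k'(w)=\min(w^{-\gamma},k)\geq u^{-\gamma}$ on $\{u-\varepsilon>w\}$ is correct (using $0\leq w<u-\varepsilon<u$ in one case and $k\geq\varepsilon^{-\gamma}>u^{-\gamma}$ in the other), and the rigidity argument on the left-hand side, based on the fact that $\psi_\varepsilon = g(u-w)$ with $g(t)=(t-\varepsilon)^+$ nondecreasing, is the right mechanism.

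There is, however, a genuine gap in the passage from $C^\infty_c(\Omega)$ to the test function $\psi_\varepsilon$. Under only $f\in L^1(\Omega)$, the integral $\int_\Omega f\,u^{-\gamma}\psi_\varepsilon\,dx$ need not be finite: the $\varepsilon$-shift gives $u^{-\gamma}\leq\varepsilon^{-\gamma}$ on $\operatorname{supp}\psi_\varepsilon$, but $\psi_\varepsilon\in W^{s,p}_0(\Omega)$ is in general unbounded, so $\varepsilon^{-\gamma}\int_\Omega f\,\psi_\varepsilon\,dx$ can be $+\infty$ (one would need $f\in L^{(p^*_s)'}$ to pair it with $\psi_\varepsilon\in L^{p^*_s}$). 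Consequently, any approximating sequence $\varphi_n\in C^\infty_c(\Omega)$ with $\varphi_n\to\psi_\varepsilon$ in $W^{s,p}_0(\Omega)$ cannot be uniformly bounded, and there is no $L^1$-dominating function $G$ with $|f\,u^{-\gamma}\varphi_n|\leq G$; your claimed dominated-convergence step therefore does not go through, and the subtracted inequality risks a meaningless $\infty-\infty$. The paper circumvents this precisely by inserting the additional truncation $T_\tau$: the admissible test functions are $\tilde\varphi_{\tau,n}=T_\tau\bigl(\min\{(u-w-\varepsilon)^+,\varphi_n^+\}\bigr)\in W^{s,p}_0(\Omega)\cap L^\infty_c(\Omega)$, uniformly bounded by $\tau$, which also compensates for $u$ being only in $W^{s,p}_{\rm loc}(\Omega)\cap L^{p-1}(\Omega)$. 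In the limit one works with $T_\tau\bigl((u-w-\varepsilon)^+\bigr)$, for which the right-hand side is controlled by $\tau\,\varepsilon^{-\gamma}\|f\|_{L^1}$, and the vanishing of $T_\tau\bigl((u-w-\varepsilon)^+\bigr)$ still implies $(u-w-\varepsilon)^+=0$. Adding a $T_\tau$ (or an equivalent $L^\infty$ truncation) to $\psi_\varepsilon$ is what your argument is missing; with it, the rest of your reasoning is sound.
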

\begin{proof}
For $\e>0$ and $w$ as in Lemma \ref{lemmause}, it follows that
\[
(u-w-\e)^+\in W^{s,p}_0(\Omega)\,.
\]
This can be easily deduced by the fact that  $w\in W^{s,p}_0(\Omega)$ and $w\geqslant 0$ a.e. in $\Omega$, so that the support of $(u-w-\e)^+$ is contained in the support of $(u-\e)^+$.
Therefore, by \eqref{ksjfskgfdfjigf} and by standard density arguments, it follows
\begin{equation}\label{eq111}
\begin{split}
&\int_{\mathbb{R}^{2N}} \frac{|w(x) - w(y)|^{p-2}\, (w(x) - w(y))\, (T_\tau\left((u-w-\e)^+\right)(x)-T_\tau\left((u-w-\e)^+\right)(y))}{|x - y|^{N+s\,p}}\, dx\, dy\\
&\geq \int_\Omega\,f(x)\cdot \Phi_{k}'(w)T_\tau\left((u-w-\e)^+\right)
 \end{split}
\end{equation}
for $T_\tau (s)\,:=\, \min\{s,\tau\}$ for $s\geq 0$ and $T_\tau (-s)\,:=\,-T_\tau (s)$ for $s< 0$.
Let now $\varphi_n\in C^\infty_c(\Omega)$ such that $\varphi_n\rightarrow (u-w-\e)^+$ in $W^{s,p}_0(\Omega)$ and set
\[
\tilde\varphi_{\tau,n}\,:=\,T_\tau (\min\{(u-w-\e)^+,\varphi_n^+\})\,.
\]
It follows that $\tilde\varphi_{\tau,n}\in W^{s,p}_0(\Omega)\cap L^\infty_c(\Omega)$ so that, by a density argument 
\begin{equation}\nonumber
\int_{\mathbb{R}^{2N}} \frac{|u(x) - u(y)|^{p-2}\, (u(x) - u(y))\, (\tilde\varphi_{\tau,n}(x)-\tilde\varphi_{\tau,n}(y))}{|x - y|^{N+s\,p}}\, dx\, dy
 \leq \int_\Omega\,\frac{f(x)}{u^\gamma}\,\tilde\varphi_{\tau,n}\,dx\,.
\end{equation}
Passing to the limit as $n$ tends to infinity, it is easy to deduce that
\begin{equation}\label{eq222}
\begin{split}
 &\int_{\mathbb{R}^{2N}} \frac{|u(x) - u(y)|^{p-2}\, (u(x) - u(y))\, (T_\tau\left((u-w-\e)^+(x)\right)-T_\tau\left((u-w-\e)^+(y)\right))}{|x - y|^{N+s\,p}}\, dx\, dy\\
 &\leq \int_\Omega\frac{f(x)}{u^\gamma}T_\tau\left((u-w-\e)^+\right)\,dx\,.
 \end{split}
\end{equation}
It is convenient now to set
\[
g(t)\,:=\,T_\tau((t-\e)^+)=\min\{\tau\,,\,\max\{t-\e\,,\,0\}\}\,.
\]
With such a notation we have
\begin{equation}\nonumber
\begin{split}
&|u(x) - u(y)|^{p-2}\, (u(x) - u(y))\, (T_\tau\left((u-w-\e)^+(x)\right)-T_\tau\left((u-w-\e)^+(y)\right))=\\
&=|u(x) - u(y)|^{p-2}\, (u(x) - u(y))(u(x)-w(x)-(u(y)-w(y))\, H(x,y)
 \end{split}
\end{equation}
with
\[
H(x,y):=\frac{g(u(x)-w(x))-g(u(y)-w(y))}{(u(x)-w(x)-(u(y)-w(y))}\,.
\]
In the same way we deduce that
\begin{equation}\nonumber
\begin{split}
&|w(x) - w(y)|^{p-2}\, (w(x) - w(y))\, (T_\tau\left((u-w-\e)^+(x)\right)-T_\tau\left((u-w-\e)^+(y)\right))=\\
&=|w(x) - w(y)|^{p-2}\, (w(x) - w(y))(u(x)-w(x)-(u(y)-w(y))\, H(x,y)\,.
 \end{split}
\end{equation}
Then, we subtract \eqref{eq111} to \eqref{eq222}, choosing $\e>0$ such that $\e^{-\beta}<k$ and we deduce that
\begin{equation}\nonumber
\begin{split}
&c \int_{\mathbb{R}^{2N}} \frac{(|u(x) - u(y)|+|w(x) - w(y)|)^{p-2}\, (u(x)-w(x) - (u(y)-w(y)))^2\, }{|x - y|^{N+s\,p}}H(x,y)\, dx\, dy\\
 &\leq
 \int_\Omega\,f(x) \cdot\left(\frac{1}{u^\gamma}-\Phi_{k}'(w)\right)
 T_\tau\left((u-w-\e)^+\right)\,dx\\
 &\leq
 \int_\Omega\,f(x) \cdot\left(\Phi_{k}'(u)-\Phi_{k}'(w)\right)
 T_\tau\left((u-w-\e)^+\right)\,dx \leq 0\,.
 \end{split}
\end{equation}
Here we used standard elliptic estimates and the fact that $H(x,y)$ is nonnegative by the definition of $g$ that is nondecreasing.
Furthermore we have that
\begin{equation}\nonumber
\begin{split}
 \int_{\mathbb{R}^{2N}} \frac{(|u(x) - u(y)|+|w(x) - w(y)|)^{p-2}\, (T_\tau\left((u-w-\e)^+(x)\right)-T_\tau\left((u-w-\e)^+(y)\right))^2\, }{|x - y|^{N+s\,p}}\, dx\, dy
\leq 0
 \end{split}
\end{equation}
thus proving that
\[
u\leq w+\e \leq v+\e\qquad \text{a.e. in}\quad \Omega
\]
and the thesis follows letting $\e\rightarrow 0$.
\end{proof}

\noindent
In light of Theorem \ref{comparison} we are now in position to conclude the proof of our main results.
\begin{proof}[Proof of  Theorem \ref{main}]
 If $u$ and $v$
are two solutions to \eqref{prob} with zero Dirichlet boundary condition, then we have that
$u\leq v$ by Theorem \ref{comparison}. In the same way it follows that $v\leq u$.
\end{proof}

\noindent
We now deduce a symmetry result from the uniqueness of the solution. We have the following:

\begin{proof}[Proof of Theorem \ref{Symmetry}]
By rotation and translation invariance, we may and we will assume that
$\Omega$ is symmetric in the $x_1$-direction and $f(x_1,x')=f(-x_1,x')$ (with $x'\in\R^{N-1}$).
Setting $v(x_1,x')\,:=\, u(-x_1,x')$ it follows that $v$ is a solution to \eqref{prob}  with zero 
Dirichlet boundary condition. By uniqueness, namely applying Theorem \ref{main}, it follows that $u=v$, that is $u(x_1,x')=u(-x_1,x')$ a.e., ending the proof.
\end{proof}

\medskip

\end{document}